\newtheorem{thm}{Theorem}[section]
\newtheorem{lem}[thm]{Lemma}
\newtheorem{cor}[thm]{Corollary}
\newenvironment {proof} {\noindent{\em Proof.}}{\hspace*{\fill}$\Box$\par\vspace{4mm}}
\newcommand{\ml}{l\kern-0.55mm\char39\kern-0.3mm}
\title{\textbf{The rainbow vertex-disconnection in graphs\footnote{Supported by NSFC No.11871034, 11531011 and NSFQH No.2017-ZJ-790.}}}
\author{{\small Xuqing Bai$^1$, You Chen$^1$, Ping Li$^1$, Xueliang Li$^{1,2}$, Yindi Weng$^1$ } \\
{\small  $^1$Center for Combinatorics and LPMC}\\
{\small Nankai University, Tianjin 300071, China}\\
{\small Email: baixuqing0@163.com, chen\_you@163.com}\\
{\small Email: wjlpqdxs@163.com, lxl@nankai.edu.cn, 1033174075@qq.com}\\
\small $^2$School of Mathematics and Statistics, Qinghai Normal University\\
\small Xining, Qinghai 810008, China\\
}
\date{}
\begin{document}
\maketitle

\begin{abstract}
Let $G$ be a nontrivial connected and vertex-colored graph. A subset $X$ of the vertex set of $G$ is called rainbow if any two vertices in $X$ have distinct colors. The graph $G$ is called \emph{rainbow vertex-disconnected} if for any two vertices $x$ and $y$ of $G$, there exists a vertex subset $S$ of $G$ such that when $x$ and $y$ are nonadjacent, $S$ is rainbow and $x$ and $y$ belong to different components of $G-S$; whereas when $x$ and $y$ are adjacent, $S+x$ or $S+y$ is rainbow and $x$ and $y$ belong to different components of $(G-xy)-S$. For a connected graph $G$, the \emph{rainbow vertex-disconnection number} of $G$, denoted by $rvd(G)$, is the minimum number of colors that are needed to make $G$ rainbow vertex-disconnected.

In this paper, we characterize all graphs of order $n$ with rainbow vertex-disconnection number $k$ for $k\in\{1,2,n\}$, and determine the rainbow vertex-disconnection numbers of some special graphs.
Moreover, we study the extremal problems on the number of edges of a connected graph $G$ with order $n$ and $rvd(G)=k$ for given integers $k$ and $n$ with $1\leq k\leq n$.

\noindent\textbf{Keywords:} vertex-coloring, connectivity, rainbow
vertex-cut, rainbow vertex-disconnection number

\noindent\textbf{AMS subject classification 2010:} 05C15, 05C40.
\end{abstract}

\section{Introduction}

All graphs considered in this paper are simple, finite and undirected. Let $G=(V(G), E(G))$ be a nontrivial connected graph with vertex set $V(G)$ and edge set $E(G)$. The $order$ of $G$ is denoted
by $n=|V(G)|$. For a vertex $v\in V(G)$, the \emph{open neighborhood} of $v$ is the set $N(v)=\{u\in V(G) | uv\in E(G)\}$ and
$d(v)=|N(v)|$ is the \emph{degree} of $v$, and the \emph{closed neighborhood} of $v$ is the set $N[v]=N(v)\cup \{v\}$. The minimum and maximum degree of $G$ are denoted by $\delta(G)$ and $\Delta(G)$, respectively. Denote by $P_n$ a path on $n$ vertices. For a subset $S$ of $V(G)$, we use $G[S]$ to denote the subgraph of $G$ \emph{induced by} $S$. Let $V_1$, $V_2$ be two disjoint vertex subsets of $G$.
We denote the set of edges between $V_1$ and $V_2$ in $G$ by $E(V_1,V_2)$. We follow \cite{BM} for graph theoretical notation and terminology not defined here.

The concept of rainbow connection coloring was introduced by Chartrand et al. \cite{GG} in 2008. A rainbow path is a path whose edges are colored pairwise differently.  An edge-coloring of a graph $G$ is a rainbow connection coloring if any two vertices of $G$ are connected by a rainbow path. The rainbow connection number of a connected graph $G$, denoted by $rc(G)$, is the minimum number of colors that ensures $G$ has a rainbow connection coloring. There are a large number of papers about the rainbow connection coloring of graphs. Rainbow vertex-connection was proposed by
Krivelevich and Yuster \cite{KR} in $2010$. For more details about the rainbow vertex-connection, we refer to \cite{LSh2} and survey papers and book \cite{LSS, LS1, LS2}.

As we know that there are two ways to study the connectivity of a graph, one way is by using paths and the other is by using cuts. So, it is natural to consider the rainbow edge-cuts and rainbow vertex-cuts for the rainbow connectivity of graphs.

In \cite{GC}, Chartrand et al. first studied the rainbow edge-cuts by introducing the concept of rainbow disconnection of graphs.
Let $G$ be a nontrivial connected and edge-colored graph. An \emph{edge-cut} of $G$ is a set $R$ of edges of $G$ such that $G-R$ is disconnected. If, moreover, any two edges in $R$ have different colors, then $R$ is called a \emph{rainbow cut}. A rainbow cut $R$ is called a
$u$-$v$ \emph{rainbow cut} if the vertices $u$ and $v$
belong to different components of $G-R$. An edge-coloring of $G$ is called a rainbow disconnection coloring if for every two distinct vertices $u$ and $v$ of $G$, there exists a $u$-$v$ rainbow cut in $G$, separating them. The \emph{rainbow disconnection number} $rd(G)$ of $G$
is the minimum number of colors required by a rainbow disconnection coloring of $G$.

In order to study the rainbow vertex-cut, we introduce the concept of rainbow vertex-disconnection number in this paper. For a connected and vertex-colored graph $G$, let $x$ and $y$ be two vertices of $G$.
If $x$ and $y$ are nonadjacent, then an $x$-$y$ \emph{vertex-cut} is a subset $S$ of $V(G)$ such that $x$ and $y$ belong to different components of $G-S$. If $x$ and $y$ are adjacent, then an $x$-$y$ \emph{vertex-cut} is a subset $S$ of $V(G)$ such that $x$ and $y$ belong to different components of $(G-xy)-S$. A vertex subset $S$ of $G$ is \emph{rainbow} if no two vertices of $S$ have the same color. An $x$-$y$ \emph{rainbow vertex-cut} is an $x$-$y$ vertex-cut $S$ such that if $x$ and $y$ are nonadjacent, then $S$ is rainbow; if $x$ and $y$ are adjacent, then $S+x$ or $S+y$ is rainbow.

A vertex-colored graph $G$ is called \emph{rainbow vertex-disconnected} if for any two vertices $x$ and $y$ of $G$, there exists an $x$-$y$ rainbow vertex-cut. In this case, the vertex-coloring $c$ is called a \emph{rainbow vertex-disconnection coloring} of $G$. For a connected graph $G$, the \emph{rainbow vertex-disconnection number}
of $G$, denoted by $rvd(G)$, is the minimum number of colors that are needed to make $G$ rainbow vertex-disconnected. A rainbow vertex-disconnection coloring with $rvd(G)$ colors is called an
$rvd$-\emph{coloring} of $G$.

As is well-known, graphs can model a wide variety of practical problems and applications in a simple and understandable way. Assigning colors to the vertices or edges of graphs can further improve this ability. The rainbow vertex-disconnection coloring could also be applied to solve many practical problems. Next, we give two examples of applications.

The rainbow vertex-disconnection coloring can model frequency assignment problem for signal towers. The signal towers can transmit and receive information. Furthermore, each tower is equipped with a signal interceptor. In order to prevent the transmission of information, we want to capture the information between any two towers and feedback the interception position. In order to solve this problem, we translate
it into a coloring problem as follows: Each signal tower $X$ is represented by a vertex, also denoted by $X$. If two signal towers can receive information from each other, then we say that they adjacent and add an edge between the two corresponding vertices. The resulting graph is denoted by $G$, and we assign a color to each vertex based on the frequency emitted by the tower. Suppose that we want to intercept the information from tower $A$ to tower $B$. If vertices $A$ and $B$ representing towers $A, B$ are nonadjacent, then the towers in the corresponding $A$-$B$ vertex-cut of $G$ open their signal interception devices. If vertices $A$ and $B$ representing towers $A,B$ are adjacent, in addition to turning on the signal interception devices in the corresponding $A$-$B$ vertex-cut of $G$, we also need to turn on the device of tower $B$. To determine the location of the tower which intercepts the information, the $A$-$B$ vertex-cut in $G$ needs to be rainbow; whereas if the vertices representing towers $A,B$ are adjacent, the vertex set consisting of the vertices in the $A$-$B$ vertex-cut of $G$ and vertex $B$ needs to be rainbow. This coloring can be relaxed further. If the vertices $A, B$ are adjacent, then we only need that the color of one of the vertices $A$ and $B$ is different from the colors of the vertices in the $A$-$B$ vertex-cut. Given a rainbow vertex set for any two vertices $A$ and $B$ in advance (if the vertices $A, B$ are nonadjacent, it refers to an $A$-$B$ rainbow vertex-cut in $G$; if the vertices $A, B$ are adjacent, it refers to the set consisting of a $A$-$B$ rainbow vertex-cut of $G$ and one of the vertices $A$ and $B$ whose color is different from the colors of vertices in the $A$-$B$ vertex-cut). It is used to determine the interception position corresponding to the frequency emitted by the tower. If it is intercepted by tower $B$, then tower $B$ sends out the frequency corresponding to the color of the endpoint in the given rainbow vertex set of $A$ and $B$. Since frequencies are expensive, it is hoped that the number of frequencies is as small as possible. Then the minimum number of frequencies required in the frequency assignment problem for signal towers is precisely the rainbow vertex-disconnection number of the corresponding graph.

Another example is as follows. In the circulation of goods, we want to prevent some things from happening, such as delivering confidential letters, smuggling drugs and trading in wildlife. We need to intercept these goods in the cities which are passed by goods. When some city intercepts the goods successfully, the city could feedback the location by transmitting signals (with some special frequency) to the other cities. For the sake of solving this practical problem, we denote each city by a vertex. We assign an edge between two vertices if the corresponding cities are connected by a transporting road, and assign a color to each vertex based on the frequency emitted by city. Assume that the goods is transported from city $A$ to city $B$ (the corresponding vertices are also denoted by $A$ and $B$). For intercepting goods, we consider $A$-$B$ vertex-cut (if vertices $A$ and $B$ are adjacent, we also need to intercept the goods in city $B$). To feedback the location of city which intercepts goods, the $A$-$B$ vertex-cut need to be rainbow to denote the different locations (if vertices $A$ and $B$ are adjacent, then the color of vertex $B$ need to differ from the colors of the vertices in the vertex-cut). We can relax the coloring further. If vertices $A$ and $B$ are adjacent, then we only need that the color of vertex $A$ or $B$ is different from the colors of the vertices in the vertex-cut. Then the minimum number of frequencies for cities required
in this problem is precisely the rainbow vertex-disconnection number of the corresponding graph.

Let $x$ and $y$ be two vertices of a graph $G$. The local connectivity $\kappa_G(x,y)$ of nonadjacent $x$ and $y$ in $G$ is the minimum number of vertices of $G$ separating $x$ from $y$ in $G$. If $x$ and $y$ are adjacent vertices in $G$, the local connectivity $\kappa_G(x,y)$ of $x$ and $y$ in $G$ is defined as $\kappa_{G-xy}(x,y)+1$. The \emph{connectivity} $\kappa(G)$ of $G$ is the minimum number of vertices whose removal results in a disconnected graph or a trivial graph. The \emph{upper connectivity} $\kappa^+(G)$ of $G$ is the upper bound of the function $\kappa_G(x,y)$.

This paper is organized as follows. In Section $2$, we provide some useful lemmas that will be used in later discussion, and we also characterize the graphs having rainbow vertex-disconnection number
$1$, $2$ and $n$, respectively. In Section $3$, we give the rainbow vertex-disconnection numbers of wheel graphs and complete multipartite graphs. In Section $4$, we determine the minimum size (number of edges) of a connected graph $G$ of order $n$ with $rvd(G)=k$ for given integers $k$ and $n$ with $1\leq k\leq n$. However, we can only give a range of the maximum size of a connected graph $G$ of order $n$ with $rvd(G)=k$
for given integers $k$ and $n$ with $1\leq k\leq n$. Further efforts should be made to get the exact value of the maximum size.

\section{Preliminaries}

At first, we state some fundamental results about the rainbow vertex-disconnection number of graphs, which will be used in the sequel.

\begin{lem}\label{rvdsubgraph}
If $G$ is a nontrivial connected graph and $H$ is a connected subgraph of $G$, then $rvd(H)\leq rvd(G)$.
\end{lem}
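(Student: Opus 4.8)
The plan is to show that restricting an $rvd$-coloring of $G$ to the vertices of $H$ already gives a rainbow vertex-disconnection coloring of $H$ using no more colors. Let $c$ be an $rvd$-coloring of $G$, so it uses exactly $rvd(G)$ colors, and let $c'$ be its restriction to $V(H)$; since $V(H)\subseteq V(G)$, the coloring $c'$ uses at most $rvd(G)$ colors. It then suffices to verify that $c'$ makes $H$ rainbow vertex-disconnected, for then $rvd(H)\le rvd(G)$ follows immediately from the minimality in the definition of $rvd(H)$.

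Fix two vertices $x,y\in V(H)$. Since $c$ is an $rvd$-coloring of $G$, there is an $x$-$y$ rainbow vertex-cut $S$ of $G$; set $S'=S\cap V(H)$. The central claim is that $S'$ is an $x$-$y$ rainbow vertex-cut of $H$ under $c'$. The separation part is uniform across all cases: any $x$-$y$ path in $H-S'$ (or in $(H-xy)-S'$ when $xy\in E(H)$) uses only vertices of $V(H)\setminus S$ and only edges of $E(H)\subseteq E(G)$, hence is an $x$-$y$ path in $G-S$ (respectively in $(G-xy)-S$), contradicting that $S$ separates $x$ and $y$ in $G$. Therefore $x$ and $y$ lie in different components of $H-S'$ (respectively $(H-xy)-S'$), so $S'$ is an $x$-$y$ vertex-cut of $H$.

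It remains to check the rainbow condition, and here I would split into cases according to adjacency, noting that adjacency in $H$ implies adjacency in $G$ but not conversely. If $x$ and $y$ are nonadjacent in $H$, then I only need $S'$ to be rainbow; since $S$ is rainbow under $c$ (it equals, or is a subset of, a rainbow set $S+x$ or $S+y$), the subset $S'\subseteq S$ is rainbow under $c'$. If $x$ and $y$ are adjacent in $H$, then they are adjacent in $G$, so $S+x$ or $S+y$ is rainbow under $c$; as $x,y\notin S$ and $x,y\in V(H)$, the set $S'+x$ or $S'+y$ is a subset of $S+x$ or $S+y$ and hence rainbow under $c'$. In all cases $S'$ is an $x$-$y$ rainbow vertex-cut of $H$, which completes the verification.

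The argument is essentially bookkeeping, and the only point that needs care---the main potential obstacle---is the mismatch between adjacency in $H$ and in $G$. The subtle case is when $x$ and $y$ are adjacent in $G$ but the edge $xy$ is absent from $H$: there the $G$-cut only guarantees that $S+x$ or $S+y$ is rainbow, yet in $H$ the pair is nonadjacent and so only the weaker requirement that $S'$ alone be rainbow must be met. This is consistent because $S$ is rainbow whenever $S+x$ or $S+y$ is, so passing to $V(H)$ can only make the rainbow condition easier to satisfy; one simply has to be careful not to demand the stronger ``adjacent'' rainbow condition in $H$ when it is not required.
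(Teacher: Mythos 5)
Your proof is correct and follows essentially the same approach as the paper: restrict the $rvd$-coloring of $G$ to $H$, take $S'=S\cap V(H)$ for an $x$-$y$ rainbow vertex-cut $S$ of $G$, and observe that any surviving $x$-$y$ path in $H$ would survive in $G$. Your treatment of the adjacency mismatch (the pair adjacent in $G$ but not in $H$) is more explicit than the paper's, but it is the same argument.
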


\noindent $Proof.$ Suppose that $c$ is an $rvd$-coloring of $G$ and $H$ is a connected subgraph of $G$. Let $c'$ be a coloring that is obtained by restricting $c$ to $H$. Let $x$ and $y$ be two vertices of $H$ and $S$ be an $x$-$y$ rainbow vertex-cut of $G$. Then $S'=S\cap V(H)$ is an $x$-$y$ rainbow vertex-cut in $H$; otherwise, if there exists an $x$-$y$ path $P$ with length at least $2$ in $H-S'$, then $P$ is also in $G-S$, a contradiction. Thus, $c'$ is a rainbow vertex-disconnection coloring of $H$ and then $rvd(H)\leq rvd(G)$.  \hfill$\Box$\\

A $block$ of a graph $G$ is a maximal connected subgraph of $G$ that contains no cut vertices. So, a block of $G$ is a cut edge of $G$ or a $2$-connected subgraph of $G$ with at least three vertices. The \emph{block decomposition} of $G$ is the set of blocks of $G$.

\begin{lem}\label{rvdblock}
Let $G$ be a nontrivial connected graph, and let $B$ be a block of $G$ such that $rvd(B)$ is maximum among all blocks of $G$. Then $rvd(G)=rvd(B)$.
\end{lem}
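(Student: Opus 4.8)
The plan is to prove the two inequalities $rvd(G)\ge rvd(B)$ and $rvd(G)\le rvd(B)$ separately. The first direction is immediate from Lemma~\ref{rvdsubgraph}: since every block $B$ of $G$ is a connected subgraph, we have $rvd(B)\le rvd(G)$, and in particular the block $B$ maximizing $rvd$ among all blocks satisfies $rvd(B)\le rvd(G)$. So the whole content of the lemma lies in the reverse inequality: exhibiting a rainbow vertex-disconnection coloring of $G$ that uses only $rvd(B)$ colors.

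For the upper bound, let $k=rvd(B)$ where $B$ is the block of maximum $rvd$-value. I would first equip each block $B_i$ of $G$ with its own $rvd$-coloring $c_i$ using at most $k$ colors (possible since $rvd(B_i)\le k$ for every block). The key structural fact I would invoke is that the block-cut tree of $G$ is a tree: any two blocks share at most one vertex (a cut vertex), and removing a cut vertex disconnects the blocks hanging off it. I would then try to assemble the individual block colorings into a global coloring $c$ of $G$. The natural approach is to process blocks in an order consistent with the block-cut tree (say a BFS/DFS ordering rooted at some block), and when a new block $B_i$ is attached to the already-colored part through a single cut vertex $v$, permute the colors of $c_i$ so that the color it assigns to $v$ agrees with the color $v$ already has. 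Since $c_i$ is a coloring with color set of size at most $k$ and we only need to match one vertex's color, a suitable relabeling of the $k$ colors always exists.

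The main verification step is to check that this merged coloring $c$ is a rainbow vertex-disconnection coloring of $G$, i.e.\ that for any two vertices $x,y$ of $G$ there is an $x$-$y$ rainbow vertex-cut. Here I would split into cases. If $x$ and $y$ lie in a common block $B_i$, then the block coloring $c_i$ provides an $x$-$y$ rainbow vertex-cut $S$ inside $B_i$; I must argue that $S$ (together with $x$ or $y$ in the adjacent case) still separates $x$ from $y$ in all of $G$, not just in $B_i$. This follows because any $x$-$y$ path in $G$ must stay within $B_i$ or pass through a cut vertex; by the block structure, any detour outside $B_i$ from $x$ must return to $B_i$ through the same cut vertex it left by, so $S$ still blocks all $x$-$y$ paths. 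If $x$ and $y$ lie in different blocks, then on the path between them in the block-cut tree there is a cut vertex $w$ separating $x$ from $y$; I would take $S=\{w\}$, which is trivially rainbow, as the desired $x$-$y$ rainbow vertex-cut.

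The step I expect to be the main obstacle is the careful argument that a within-block rainbow vertex-cut remains a valid cut in the whole graph, together with handling the adjacent-versus-nonadjacent distinction cleanly (since the definition of rainbow vertex-cut differs in those cases, and when $x,y$ are adjacent we must ensure $S+x$ or $S+y$ is still rainbow after merging). The relabeling argument must also be stated so that no color conflicts are introduced at cut vertices shared by three or more blocks; phrasing the induction along the block-cut tree so that each cut vertex's color is fixed once and then respected by every incident block is the delicate bookkeeping I would need to get right.
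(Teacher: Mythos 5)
Your proposal is correct and follows essentially the same route as the paper: the lower bound via Lemma~\ref{rvdsubgraph}, and the upper bound by assembling per-block $rvd$-colorings along the block structure, permuting colors so each newly attached block agrees with the already-colored part at its unique attachment cut vertex (the paper's ``exchange operation''), then verifying the two cases (same block versus different blocks, the latter cut by a single cut vertex). The points you flag as delicate are handled the same way in the paper, which relies on the fact that each new block meets the previously assembled subgraph in exactly one vertex.
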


\noindent $Proof.$ Let $G$ be a nontrivial connected graph. Let $\{B_{1},B_{2},\cdots,B_{t}\}$ be the block decomposition of $G$, and let $k=\max\{rvd(B_{i})| 1\leq i \leq t\}$. If $G$ has no cut vertex, then $G=B_{1}$ and the result follows. Next, we assume that $G$ has at least one cut vertex. Since each block is a connected subgraph of $G$,
$rvd(G)\geq k$ by Lemma \ref{rvdsubgraph}.

Let $c_{i}$ be an $rvd$-coloring of $B_{i}$. Let $H$ be a connected graph consisting of some blocks of $G$. Let $B_{i}$ $(1\leq i\leq t)$ be the block having a vertex in common with $H$, where $B_i$ is the subgraph of $G$ but not of $H$. Suppose $v$ is the common vertex of $B_{i}$ and $H$.
We define an $exchange$ $operation$ on $B_i$ as follows: If $c_{H}(v)=c_{i}(v)$, we do nothing. If $c_{H}(v)\neq c_{i}(v)$, without loss of generality, we may assume that $c_{H}(v)=1$ and $c_{i}(v)=2$.
We assign color $1$ to the vertices of $B_{i}$ that were colored with $2$, and assign color $2$ to the vertices of $B_{i}$ that were colored with $1$.

First, we take a block, say $B_1$, and let $G_1=B_1$. Then we find a block $B(\in \{B_{2},\cdots,B_{t}\})$ which has a vertex in common with graph $G_i$ ($1\leq i\leq t-1$) and add it to $G_i$ by doing exchange operation on $B$. Denote the resulting graph by $G_{i+1}$. Repeatedly, we have $G_t=G$ and get a vertex-coloring $c'$ of $G$ with $k$ colors.

Let $x$ and $y$ be two vertices of $G$. If there exists a block, say $B_{i}$, which contains both $x$ and $y$, then any $x$-$y$ rainbow
vertex-cut in $B_{i}$ with the coloring $c'_{i}$ is an $x$-$y$ rainbow
vertex-cut in $G$. If $x$ and $y$ are in different blocks, then there is exactly one $x$-$y$ internally disjoint path, say $P$, in $G$ and the path $P$ contains at least one cut vertex, say $w$. Then vertex $w$ is an $x$-$y$ rainbow vertex-cut in $G$. Hence, $rvd(G)\leq k$.  \hfill$\Box$\\

\begin{lem}\label{rvddifcolor}
Let $G$ be a nontrivial connected graph, and let $u$ and $v$ be two vertices of $G$ having at least two common neighbors. Then $u$ and $v$ receive different colors in any $rvd$-coloring of $G$.
\end{lem}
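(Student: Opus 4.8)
The plan is to argue by contradiction, exploiting the symmetry of the configuration. Suppose $c$ is an $rvd$-coloring of $G$ in which $u$ and $v$ receive the same color, and let $w_1, w_2$ be two common neighbors of $u$ and $v$. The crucial observation is that the relation ``being a common neighbor'' is symmetric here: since $uw_1, uw_2, vw_1, vw_2 \in E(G)$, the vertices $u$ and $v$ are themselves two common neighbors of $w_1$ and $w_2$. Rather than trying to separate $u$ from $v$ (where, as I explain below, the colors of $u$ and $v$ need not enter the rainbow condition at all), I would instead examine the pair $w_1, w_2$ and derive a contradiction from the nonexistence of a $w_1$-$w_2$ rainbow vertex-cut.

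First I would show that every $w_1$-$w_2$ vertex-cut $S$ must contain both $u$ and $v$. The two paths $w_1 u w_2$ and $w_1 v w_2$ are internally disjoint $w_1$-$w_2$ paths with distinct internal vertices $u$ and $v$, so any vertex set separating $w_1$ from $w_2$ must meet the interior of each, forcing $u, v \in S$. When $w_1$ and $w_2$ are adjacent, one separates them in $(G - w_1 w_2) - S$, but deleting the edge $w_1 w_2$ leaves both length-two paths intact, so the same conclusion holds. Here one uses that $u, v, w_1, w_2$ are four distinct vertices, which follows because $G$ is simple and $w_1 \neq w_2$.

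With $u, v \in S$ established, the contradiction is immediate once I split into the two cases of the rainbow condition. If $w_1$ and $w_2$ are nonadjacent, then $S$ must be rainbow, but $u, v \in S$ with $c(u) = c(v)$ makes this impossible. If $w_1$ and $w_2$ are adjacent, then $S + w_1$ or $S + w_2$ must be rainbow; since both of these supersets contain $u$ and $v$, neither can be rainbow. In either case no $w_1$-$w_2$ rainbow vertex-cut exists, contradicting the assumption that $c$ is an $rvd$-coloring. Hence $c(u) \neq c(v)$.

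I expect the only real subtlety to be the symmetry observation itself: the natural first instinct is to separate $u$ from $v$ directly, but in the nonadjacent case the colors of $u$ and $v$ play no role in whether a $u$-$v$ cut is rainbow, so that route cannot constrain $c(u)$ and $c(v)$. Recognizing that one should instead separate $w_1$ and $w_2$ — which forces $u$ and $v$ jointly into the cut — is the key step; the remaining case analysis on the adjacency of $w_1$ and $w_2$ is routine.
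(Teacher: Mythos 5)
Your proof is correct and follows essentially the same route as the paper: the key step in both is the symmetry observation that $u$ and $v$ are themselves the two internal vertices of the internally disjoint paths $w_1uw_2$ and $w_1vw_2$, so any $w_1$-$w_2$ vertex-cut must contain both of them and hence cannot be rainbow if $c(u)=c(v)$. Your write-up merely spells out in full the details (distinctness of the four vertices, the adjacent versus nonadjacent cases for $w_1,w_2$) that the paper's three-line proof leaves implicit.
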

\begin{proof}
Assume that $u,v$ have two common neighbors $x,y$. Then $uxv$, $uyv$ are two internally disjoint paths between $u$ and $v$. So, $xuy$, $xvy$ are two internally disjoint paths between $x$ and $y$. Thus, $u,v$ should be assigned different colors in any $rvd$-coloring of $G$.
\end{proof}

The following result is an immediate consequence of Lemma \ref{rvddifcolor}.

\begin{cor}\label{rvdcomplete}
For an integer $n\geq 2$, $$rvd(K_{n})=\left\{
\begin{array}{lcl}
n-1,       &      & {if~n=2,3},\\
n,         &      & {if~n\geq 4}.
\end{array} \right .$$
\end{cor}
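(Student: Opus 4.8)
The plan is to separate the argument into a general upper bound, the large case $n\geq 4$ handled by Lemma \ref{rvddifcolor}, and the two small cases $n\in\{2,3\}$ handled by direct inspection. For the upper bound valid for every $n$, I would color the $n$ vertices of $K_n$ with $n$ pairwise distinct colors. Then every vertex subset is automatically rainbow, so for any two (necessarily adjacent) vertices $x,y$ the set $S=V(K_n)\setminus\{x,y\}$ is an $x$-$y$ vertex-cut with $S+x$ rainbow, and $(K_n-xy)-S$ consists of the two isolated vertices $x$ and $y$. Hence $rvd(K_n)\leq n$ always.

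For $n\geq 4$, I would invoke Lemma \ref{rvddifcolor} directly: any two distinct vertices $u,v$ of $K_n$ have exactly $n-2\geq 2$ common neighbors, so in every $rvd$-coloring they must receive different colors. Since this holds for all pairs, an $rvd$-coloring must use $n$ distinct colors, giving $rvd(K_n)\geq n$. Combining with the upper bound yields $rvd(K_n)=n$.

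It remains to treat $n=2,3$, where Lemma \ref{rvddifcolor} gives no leverage since no two vertices have two common neighbors. For $K_2$, a single color works: for the unique adjacent pair $x,y$, the empty set $S=\varnothing$ already separates $x$ and $y$ in $K_2-xy$ and $S+x=\{x\}$ is trivially rainbow, so $rvd(K_2)=1$. For $K_3$ on vertices $x,y,z$, I would first show one color is insufficient: separating an adjacent pair $x,y$ in the path $K_3-xy=x\,z\,y$ forces $S=\{z\}$, but with one color both $S+x$ and $S+y$ contain two equally colored vertices, so no rainbow vertex-cut exists; thus $rvd(K_3)\geq 2$. Then I would exhibit the coloring assigning color $1$ to $x,y$ and color $2$ to $z$ and verify all three pairs: for $\{x,y\}$ take $S=\{z\}$ with $S+x=\{z,x\}$ rainbow, and for $\{x,z\}$ and $\{y,z\}$ the analogous cut plus the endpoint $z$ is rainbow. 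This gives $rvd(K_3)=2$.

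The main obstacle is the $n=3$ case, since the defining condition for adjacent vertices (that $S+x$ \emph{or} $S+y$ be rainbow rather than $S$ itself) must be checked by hand for each pair, and the structural Lemma \ref{rvddifcolor} contributes nothing; all other cases follow cleanly from the lemma together with the universal upper bound.
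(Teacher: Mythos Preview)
Your proposal is correct and follows the same approach as the paper, which states the corollary as an immediate consequence of Lemma~\ref{rvddifcolor} without further detail. Your explicit verification of the small cases $n=2,3$---where Lemma~\ref{rvddifcolor} gives no information---and of the universal upper bound fills in exactly the steps the paper leaves to the reader.
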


\begin{thm}\label{rvdlocalconn}
Let $G$ be a nontrivial connected graph of order $n$. Then
$\kappa(G) \leq \kappa^+(G) \leq rvd(G) \leq n$.
\end{thm}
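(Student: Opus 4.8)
The plan is to establish the three inequalities $\kappa(G)\le\kappa^+(G)$, $\kappa^+(G)\le rvd(G)$, and $rvd(G)\le n$ separately, since each rests on a different idea. The rightmost bound is the easiest: I would color the $n$ vertices of $G$ with $n$ pairwise distinct colors, so that \emph{every} vertex subset is automatically rainbow. Then for any pair $x,y$ it suffices to exhibit an $x$-$y$ vertex-cut (in $G$ if $x,y$ are nonadjacent, in $G-xy$ if they are adjacent), which always exists in a connected graph --- for instance $N(x)$ separates $x$ from $y$. In the adjacent case $S+x$ is rainbow because all colors are distinct. Hence this coloring is a rainbow vertex-disconnection coloring and $rvd(G)\le n$.

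For the leftmost inequality $\kappa(G)\le\kappa^+(G)$, I would read $\kappa^+(G)=\max_{x,y}\kappa_G(x,y)$ directly from the definition and compare it with $\kappa(G)$. If $G$ is not complete, choose any nonadjacent pair $x,y$; every $x$-$y$ separator $S$ has $x$ and $y$ in different components of $G-S$, so $G-S$ is disconnected and thus $S$ is a vertex-cut of $G$, giving $|S|\ge\kappa(G)$. Taking the minimum over all such $S$ yields $\kappa_G(x,y)\ge\kappa(G)$, and therefore $\kappa^+(G)\ge\kappa_G(x,y)\ge\kappa(G)$. If $G=K_n$, a direct computation gives $\kappa_{K_n}(x,y)=\kappa_{K_n-xy}(x,y)+1=(n-2)+1=n-1=\kappa(K_n)$ for every (necessarily adjacent) pair, so equality holds. (Equivalently, one may invoke the classical identity $\kappa(G)=\min_{x,y}\kappa_G(x,y)$, whereby $\kappa$ and $\kappa^+$ are simply the minimum and maximum of one and the same function.)

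The crux is the middle inequality $\kappa^+(G)\le rvd(G)$. Fix a pair $x,y$ attaining $\kappa^+(G)=\kappa_G(x,y)$, let $c$ be an arbitrary $rvd$-coloring of $G$, and show that $c$ uses at least $\kappa^+(G)$ colors. By definition there is an $x$-$y$ rainbow vertex-cut $S$. If $x,y$ are nonadjacent, then $S$ is an $x$-$y$ vertex-cut, so $|S|\ge\kappa_G(x,y)=\kappa^+(G)$; since $S$ is rainbow it exhibits $|S|$ distinct colors, so $c$ uses at least $\kappa^+(G)$ colors. If $x,y$ are adjacent, then $S$ is an $x$-$y$ vertex-cut of $G-xy$, so $|S|\ge\kappa_{G-xy}(x,y)=\kappa_G(x,y)-1$; here $S+x$ or $S+y$ is rainbow and has $|S|+1\ge\kappa_G(x,y)=\kappa^+(G)$ distinct colors, again forcing at least $\kappa^+(G)$ colors. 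Minimizing over all $rvd$-colorings gives $rvd(G)\ge\kappa^+(G)$. The main obstacle here is the bookkeeping of the ``$+1$'' convention for adjacent pairs: one must align the definition $\kappa_G(x,y)=\kappa_{G-xy}(x,y)+1$ with the clause that $S+x$ or $S+y$ (rather than $S$ itself) be rainbow, so that the extra vertex precisely accounts for the extra unit of local connectivity. Chaining the three inequalities then completes the proof.
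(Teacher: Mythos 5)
Your proposal is correct and follows essentially the same route as the paper: the upper bound via an all-distinct coloring (which the paper dismisses as obvious), and the key middle inequality by taking an $x$-$y$ rainbow vertex-cut $S$ and observing that $|S|\geq\kappa_G(x,y)$ in the nonadjacent case while $|S+x|=|S|+1\geq\kappa_{G-xy}(x,y)+1=\kappa_G(x,y)$ in the adjacent case. Your version merely spells out the details (including the $\kappa\le\kappa^+$ comparison and the $+1$ bookkeeping) that the paper leaves implicit; the only trifling imprecision is that in the adjacent case the separating set should be $N(x)\setminus\{y\}$ rather than $N(x)$.
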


\begin{proof}
Obviously, the upper bound holds. For the lower bound, let $x,y$ be any two vertices of $G$. Assume that $S$ is an $x$-$y$ rainbow vertex-cut.
There are $\kappa_{G}(x,y)$ internally disjoint paths between $x$ and $y$ in $G$. If $x$ and $y$ are nonadjacent, then $rvd(G)\geq |S|\geq \kappa_{G}(x,y)$. If $x$ and $y$ are adjacent, then $S+x$ (or $S+y$) is rainbow. So, $rvd(G)\geq |S+x|=|S|+1\geq \kappa_{G}(x,y)$.
Thus, $rvd(G)\geq \kappa^+(G)\geq \kappa(G)$.
\end{proof}

From the above one can see that for a nontrivial connected graph $G$ of order $n$, $1\leq rvd(G)\leq n$. We now characterize all graphs $G$ for which $rvd(G)$ attains the lower $1$ or upper bound $n$.

\begin{thm}\label{rvd1}
Let $G$ be a nontrivial connected graph. Then $rvd(G)=1$ if and only if $G$ is a tree.
\end{thm}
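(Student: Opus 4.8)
The plan is to prove the biconditional in both directions, using the tools already established. The forward direction (if $rvd(G)=1$ then $G$ is a tree) I would prove by contrapositive: assume $G$ is not a tree and show $rvd(G)\geq 2$. Since $G$ is connected but not a tree, it contains a cycle, hence has a block $B$ that is $2$-connected with at least three vertices. The natural move is to invoke Lemma \ref{rvdblock}, which reduces the computation of $rvd(G)$ to the maximum $rvd$ over blocks, so it suffices to show $rvd(B)\geq 2$ for such a block. In a $2$-connected graph on at least three vertices, any two vertices $x,y$ are joined by at least two internally disjoint paths, so by Theorem \ref{rvdlocalconn} we have $rvd(B)\geq \kappa(B)\geq 2$. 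This immediately gives $rvd(G)\geq 2$, contradicting $rvd(G)=1$.

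For the reverse direction (if $G$ is a tree then $rvd(G)=1$), I would color every vertex with the single color $1$ and verify this is a rainbow vertex-disconnection coloring. Let $x,y$ be any two vertices of the tree $T$. If $x$ and $y$ are nonadjacent, then since $T$ is a tree there is a unique $x$-$y$ path, which has an internal vertex $w$; taking $S=\{w\}$ gives a single-vertex cut, which is trivially rainbow, separating $x$ from $y$ in $T-S$. If $x$ and $y$ are adjacent, then $xy$ is a cut edge (every edge of a tree is a bridge), so $(T-xy)$ already separates $x$ and $y$ into different components; we may take $S=\emptyset$, and then $S+x=\{x\}$ is trivially rainbow. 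In either case we exhibit an $x$-$y$ rainbow vertex-cut, so one color suffices and $rvd(T)\leq 1$; combined with $rvd(T)\geq 1$ this yields $rvd(T)=1$.

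I expect the argument to be almost entirely routine, as both directions follow cleanly from the block lemma and the connectivity lower bound. The one point deserving care is the adjacent case in the tree direction: the definition of rainbow vertex-cut for adjacent vertices requires $S+x$ (or $S+y$) to be rainbow rather than merely $S$, so I must make sure the empty cut together with a single endpoint is correctly handled, and confirm that removing the edge $xy$ genuinely disconnects $x$ from $y$ in a tree. This is the only place where the slightly awkward adjacent-vertex convention in the definition could trip up a careless write-up, so it is the step I would state most explicitly.
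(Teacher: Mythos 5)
Your proof is correct and uses the same key tool as the paper: the forward direction in both cases comes down to taking two vertices on a cycle and applying the lower bound $rvd(G)\geq\kappa_G(x,y)\geq 2$ from Theorem~\ref{rvdlocalconn} (the paper does this directly on the cycle, without your detour through Lemma~\ref{rvdblock}, which is harmless but unnecessary). The converse, which the paper omits as trivial, you verify correctly, including the adjacent case where $S=\emptyset$ works because every tree edge is a bridge and $S+x=\{x\}$ is rainbow.
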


\noindent $Proof.$ Assume, to the contrary, that $G$ contains a cycle $C$. Let $x,y$ be two vertices of $C$. Then $\kappa_{G}(x,y)\geq 2$. So, $rvd(G)\geq \kappa_{G}(x,y)\geq 2$ by Theorem \ref{rvdlocalconn},
a contradiction. \hfill$\Box$\\

\begin{lem}\label{rvdcycle}
If $C_{n}$ is a cycle of order $n\geq 3$, then $rvd(C_{n})=2$.
\end{lem}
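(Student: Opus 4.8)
The plan is to establish the two inequalities $rvd(C_n)\ge 2$ and $rvd(C_n)\le 2$ separately. For the lower bound I would simply observe that $\kappa(C_n)=2$ for every $n\ge 3$, so Theorem \ref{rvdlocalconn} immediately gives $rvd(C_n)\ge\kappa(C_n)=2$; equivalently, since $C_n$ is not a tree, Theorem \ref{rvd1} already forces $rvd(C_n)\ge 2$. The real content is the upper bound, for which I would exhibit an explicit $2$-coloring and verify it is a rainbow vertex-disconnection coloring.

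Write $C_n=v_1v_2\cdots v_nv_1$. The case $n=3$ is $C_3=K_3$, for which $rvd=2$ by Corollary \ref{rvdcomplete}, so it can be disposed of at once. For $n\ge 4$ I would take the contiguous-arc coloring: set $a=\lfloor n/2\rfloor$ and color $c(v_i)=1$ for $1\le i\le a$ and $c(v_i)=2$ for $a+1\le i\le n$, so that the two color classes are two consecutive arcs of sizes $a$ and $n-a$, and both colors are used.

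For two nonadjacent vertices $x,y$, any $x$-$y$ vertex-cut must meet each of the two arcs of $C_n$ joining $x$ and $y$; conversely, choosing one vertex from each arc yields a cut $S$ with $|S|=2$, and such an $S$ is rainbow as soon as the two chosen vertices can be picked with distinct colors. The only obstruction is that every vertex of $C_n-\{x,y\}$ has the same color, i.e.\ one color class is contained in $\{x,y\}$. I would check the chosen coloring avoids this: any class of size $\ge 3$ cannot sit inside a $2$-set, while the only class that can have size $2$ (namely $\{v_1,v_2\}$, which occurs exactly when $n\in\{4,5\}$) consists of two adjacent vertices and hence is never equal to a nonadjacent pair $\{x,y\}$. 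Thus a rainbow $2$-cut exists for every nonadjacent pair. For two adjacent vertices $x,y$, I delete $xy$; then $C_n-xy$ is a path on $n$ vertices with endpoints $x,y$, and it suffices to find a single interior vertex $w$ with $S+x$ or $S+y$ rainbow for $S=\{w\}$, i.e.\ $c(w)\ne c(x)$ or $c(w)\ne c(y)$. If $c(x)\ne c(y)$ any interior $w$ works; if $c(x)=c(y)$ I take $w$ of the opposite color, which exists among the interior vertices since both colors are used and the opposite-colored vertices differ from $x,y$. As $n\ge 3$ guarantees an interior vertex, this always succeeds.

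I expect the nonadjacent case to be the only genuine obstacle. The adjacent case is essentially free, because the relaxation permitting us to adjoin $x$ or $y$ to $S$ means a single well-chosen interior vertex always suffices; for nonadjacent pairs the entire burden falls on the two-element cut, so the argument hinges on the bookkeeping that no color class is squeezed into a nonadjacent pair — which is precisely what the contiguous-arc coloring with $a=\lfloor n/2\rfloor$ arranges.
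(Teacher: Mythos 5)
Your proof is correct and follows essentially the same route as the paper: lower bound from $\kappa(C_n)=2$ via Theorem \ref{rvdlocalconn}, the case $n=3$ via $K_3$, and an explicit two-coloring by two contiguous monochromatic arcs (the paper uses arcs of sizes $2$ and $n-2$ rather than $\lfloor n/2\rfloor$ and $\lceil n/2\rceil$), verified by the same observation that no color class can lie inside a nonadjacent pair. The only quibble is your parenthetical that $\{v_1,v_2\}$ is the only class of size $2$ --- for $n=4$ the class $\{v_3,v_4\}$ also has size $2$ --- but since every size-$2$ class is a pair of consecutive, hence adjacent, vertices, your argument is unaffected.
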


\begin{proof}
If $n=3$, then $rvd(C_3)=rvd(K_3)=2$ by Corollary \ref{rvdcomplete}. Now consider $n\geq 4$. Since $\kappa(C_{n})=2$, it follows
from Theorem \ref{rvdlocalconn} that $rvd(G)\geq 2$. Suppose
$C_{n}=v_{1}v_{2}\cdots v_{n}v_{1}$. Let $c$ be a coloring of $C_{n}$ such that $c(v_1)=c(v_2)=1$ and $c(v_i)=2$ $(i\in\{3,4,\cdots,n\})$.
Let $x$ and $y$ be two vertices of $C_{n}$. If $x$ and $y$ are adjacent, then there is exactly one path $P$ with length more than two between $x$ and $y$ in $C_n$. Since $n\geq 4$, we choose a vertex $u$ on $P$ with color different from $c(x)$. Obviously, $u$ is an $x$-$y$ vertex-cut and the vertex set $\{u,x\}$ is rainbow. So, $u$ is an $x$-$y$ rainbow vertex-cut. If $x$ and $y$ are nonadjacent, then there are two $x$-$y$
paths in $C_{n}$. Since $n\geq 4$, the two paths must respectively contain an internal vertex $u$ with color $1$ and an internal vertex $v$ with color $2$. Then the vertex set $\{u,v\}$ is an $x$-$y$ rainbow vertex-cut. Thus, $c$ is a rainbow vertex-disconnection coloring of $C_{n}$ using two colors, and so $rvd(C_{n})=2$.
\end{proof}

\begin{lem}\cite{GC} \label{rvd2lem}
A $2$-connected graph $G$ is a cycle if and only if for every two vertices $u$ and $v$ of $G$, there are exactly two internally disjoint $u$-$v$ paths in $G$.
\end{lem}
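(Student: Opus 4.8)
The plan is to prove both directions, with the forward implication being routine and the reverse implication carrying the real content. For the forward direction, suppose $G$ is a cycle $C_n=v_1v_2\cdots v_nv_1$ with $n\geq 3$; then $G$ is $2$-connected. For any two distinct vertices $v_i,v_j$, the cycle splits into exactly two arcs joining them, the ``clockwise'' arc and the ``counterclockwise'' arc. These are the only two $v_i$-$v_j$ paths in $G$, since any path inside a cycle is just one of its two arcs, and their interiors are disjoint because together they partition $V(C_n)\setminus\{v_i,v_j\}$. Hence there are exactly two internally disjoint $v_i$-$v_j$ paths, and this direction needs nothing beyond the observation that a path in a cycle is determined by a choice of traversal direction.

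For the reverse implication I would argue by contraposition: assume $G$ is $2$-connected but is \emph{not} a cycle, and exhibit a pair of vertices admitting three pairwise internally disjoint paths, contradicting the hypothesis that every pair has exactly two. Since $G$ is $2$-connected it contains a cycle $C$; because $G$ is not a cycle, either $C$ has a chord or there is a vertex of $G$ lying off $C$. In the first case, a chord $st$ together with the two arcs of $C$ between $s$ and $t$ yields three internally disjoint $s$-$t$ paths, as the chord has no internal vertices and the two arcs have disjoint interiors. In the second case I would use the $2$-connectivity of $G$ — via the fan form of Menger's theorem, or equivalently an open ear decomposition (Whitney's theorem) — to attach an ear $P$ to $C$: a path whose two endpoints $s,t$ lie on $C$ and whose internal vertices all lie off $C$. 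Then $P$ and the two arcs of $C$ between $s$ and $t$ are again three pairwise internally disjoint $s$-$t$ paths.

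The single idea driving the reverse direction is therefore that ``anything beyond a bare cycle produces a third path.'' The main obstacle is guaranteeing the existence of a genuine ear with two \emph{distinct} endpoints on $C$ and interior disjoint from $C$; this is exactly what $2$-connectivity buys, and I would justify it either by invoking the open ear decomposition directly or by taking two internally disjoint paths from an off-cycle vertex $w$ to $C$ and noting they must reach $C$ at two distinct vertices. Once the ear (or chord) is in hand, checking that the three paths have pairwise disjoint interiors is immediate from the construction, and the contradiction with the ``exactly two'' hypothesis completes the argument. Care is only needed in the degenerate verifications — that the two endpoints of the ear are distinct and that a chord is not already an edge of $C$ — and both follow at once from $C$ being a cycle and the ear or chord lying outside it.
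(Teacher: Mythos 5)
Your proof is correct. Note that the paper itself gives no proof of this lemma --- it is quoted from the cited reference [Chartrand et al., Rainbow disconnection in graphs] --- so there is no in-paper argument to compare against; your two-case argument (a chord of $C$, or an ear attached to $C$ obtained from the fan lemma applied to an off-cycle vertex, either of which yields a third internally disjoint path) is the standard and complete way to establish the nontrivial direction, and the forward direction is handled correctly as well.
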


\begin{thm}\label{rvd2}
Let $G$ be a nontrivial connected graph. Then $rvd(G)=2$ if and only if each block of $G$ is either a $K_2$ or a cycle and at least one block of $G$ is a cycle.
\end{thm}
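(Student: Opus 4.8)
The plan is to prove both implications by working block-by-block, using the block-decomposition formula of Lemma~\ref{rvdblock} as the backbone.

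For the sufficiency direction, suppose every block of $G$ is a $K_2$ or a cycle, and at least one block is a cycle. By Lemma~\ref{rvdblock} we have $rvd(G)=\max_i rvd(B_i)$ over the blocks $B_i$ of $G$. Now $rvd(K_2)=1$ by Corollary~\ref{rvdcomplete}, while $rvd(C_n)=2$ for every cycle block by Lemma~\ref{rvdcycle}. Since at least one block is a cycle, the maximum equals $2$, so $rvd(G)=2$. This half is essentially immediate once Lemma~\ref{rvdblock} is invoked.

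For the necessity direction, assume $rvd(G)=2$. The main content is to show that every block of $G$ is a $K_2$ or a cycle. I would argue by contradiction: suppose some block $B$ is neither, so $B$ is $2$-connected with at least three vertices but is not a cycle. By Lemma~\ref{rvd2lem}, there are two vertices $u,v$ of $B$ that are \emph{not} joined by exactly two internally disjoint paths; since $B$ is $2$-connected, $u$ and $v$ are joined by at least three internally disjoint $u$-$v$ paths. By Menger's theorem together with the paper's convention for adjacent vertices (where the edge $uv$ itself counts as one of the internally disjoint paths, matching the $+1$ in the definition of $\kappa_G(u,v)$), this forces $\kappa_B(u,v)\geq 3$, and hence $\kappa^+(B)\geq 3$. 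Theorem~\ref{rvdlocalconn} then gives $rvd(B)\geq \kappa^+(B)\geq 3$, and since $B$ is a connected subgraph of $G$, Lemma~\ref{rvdsubgraph} yields $rvd(G)\geq rvd(B)\geq 3$, contradicting $rvd(G)=2$. Therefore every block is a $K_2$ or a cycle. To finish, note that $rvd(G)=2\neq 1$, so by Theorem~\ref{rvd1} the graph $G$ is not a tree; since a connected graph all of whose blocks are $K_2$ is a tree, at least one block must be a cycle.

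I expect the decisive step to be the necessity argument, and in particular the clean translation of ``at least three internally disjoint $u$-$v$ paths'' into the bound $rvd(G)\geq 3$. The subtlety is the distinction between adjacent and nonadjacent $u,v$ in the definition of an $x$-$y$ rainbow vertex-cut; the $+1$ built into the paper's definition of local connectivity for adjacent pairs is precisely what lets Theorem~\ref{rvdlocalconn} apply uniformly, so I would rely on that bound rather than re-deriving cut-size estimates by hand in each case.
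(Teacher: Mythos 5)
Your proposal is correct and follows essentially the same route as the paper: sufficiency via Lemma~\ref{rvdblock} combined with Lemma~\ref{rvdcycle}, and necessity by contradiction using Lemma~\ref{rvd2lem} together with the local-connectivity lower bound of Theorem~\ref{rvdlocalconn}, with the all-$K_2$ (tree) case dispatched by Theorem~\ref{rvd1}. The only cosmetic difference is that you pass through the block $B$ and Lemma~\ref{rvdsubgraph}, whereas the paper applies Theorem~\ref{rvdlocalconn} directly to $G$; both are valid.
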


\noindent $Proof.$ Let $G$ be a nontrivial connected graph. If each block of $G$ is either a $K_2$ or a cycle and at least one block of $G$ is a cycle, then $rvd(G)=2$ by Lemmas \ref{rvdblock} and \ref{rvdcycle}.

Now we verify the converse. Assume, to the contrary, that there exists at least one block which is neither a $K_2$ nor a cycle or all the blocks of $G$ are $K_2$. In the former case, by Lemma \ref{rvd2lem} there exist  two vertices $x$ and $y$ of $G$ for which $G$ contains at least three internally disjoint $x$-$y$ paths. So, $rvd(G)\geq \kappa_{G}(x,y)\geq 3$ by Theorem \ref{rvdlocalconn}, a contradiction. As for the latter case, if all the blocks of $G$ are $K_2$, then $G$ is a tree. Then, $rvd(G)=1$ from Theorem \ref{rvd1}, a contradiction.
\hfill$\Box$\\

\begin{thm}\label{rvdn}
Let $G$ be a nontrivial connected graph of order $n$. Then $rvd(G)=n$ if and only if any two vertices of $G$ have at least two common neighbors.
\end{thm}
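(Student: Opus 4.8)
The plan is to prove both implications, with the sufficiency direction being quick and the necessity direction carrying the real content, which I would establish by contraposition.

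For the direction assuming that any two vertices have at least two common neighbors, I would simply apply Lemma~\ref{rvddifcolor} to \emph{every} pair of vertices. This forces all pairs of vertices to receive pairwise distinct colors in any $rvd$-coloring, so every $rvd$-coloring uses at least $n$ colors and $rvd(G)\geq n$. Together with the upper bound $rvd(G)\leq n$ from Theorem~\ref{rvdlocalconn}, this yields $rvd(G)=n$.

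For the converse I would argue contrapositively: suppose there are two vertices $u,v$ with at most one common neighbor, and exhibit a rainbow vertex-disconnection coloring with only $n-1$ colors, forcing $rvd(G)\leq n-1<n$. Define $c$ by giving $u$ and $v$ the same color and all remaining $n-2$ vertices pairwise distinct new colors; then a vertex subset is rainbow under $c$ precisely when it does not contain both $u$ and $v$. The key structural fact, used throughout, is that since $u$ and $v$ have at most one common neighbor, among any two distinct vertices $x,y$ at most one can be a common neighbor of $u$ and $v$; in particular at least one of $x,y$ is \emph{not} a common neighbor of $u,v$.

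To verify $c$ is an $rvd$-coloring I would fix an arbitrary pair $x,y$ and, relabelling if necessary, assume that $x$ is not a common neighbor of $u$ and $v$. I then separate $x$ from $y$ by isolating $x$. If $x,y$ are nonadjacent, take $S=N(x)$, which is an $x$-$y$ vertex-cut; since $x$ is not a common neighbor, $S$ does not contain both $u$ and $v$ and is therefore rainbow. If $x,y$ are adjacent, take $S=N(x)\setminus\{y\}$ as an $x$-$y$ vertex-cut in $G-xy$ and check that $S+x$ or $S+y$ is rainbow. The hard part here is purely the bookkeeping of the adjacent case: if both $S+x$ and $S+y$ contained $\{u,v\}$, then each of $u,v$ would have to lie in $S$ itself (otherwise it would equal both $x$ and $y$, contradicting $x\neq y$), whence $u,v\in N(x)$ and $x$ would be a common neighbor of $u,v$, contrary to the choice of $x$. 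Thus at least one of $S+x,S+y$ is rainbow. Since the isolating cut is always valid and $u,v$ themselves are never common neighbors of the pair $\{u,v\}$, this single case analysis covers all pairs (including those meeting $\{u,v\}$), so $c$ is a valid $rvd$-coloring with $n-1$ colors and the contrapositive is established.
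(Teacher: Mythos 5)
Your proposal is correct and follows essentially the same route as the paper: the sufficiency direction is the identical application of Lemma~\ref{rvddifcolor}, and for necessity both arguments use the contrapositive with the coloring that repeats a color only on the pair $u,v$ having at most one common neighbor. The only difference is stylistic --- you verify the coloring directly by exhibiting the explicit cut $N(x)\setminus\{y\}$ around a vertex $x$ that is not a common neighbor of $u,v$, whereas the paper argues by contradiction that a failure of the coloring would force $u$ and $v$ to have two common neighbors.
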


\begin{proof}
Let $rvd(G)=n$. Assume, to the contrary, that there exist two vertices $u$ and $v$ of $G$ which have at most one common neighbor. Let $c$ be a vertex-coloring of $G$ that assigns color $1$ to $u$, $v$ and colors $2,3,\cdots,n-1$ to the remaining vertices of $G$. We claim that $c$ is a rainbow vertex-disconnection coloring; otherwise, there exist two vertices $u'$ and $v'$ such that any $u'$-$v'$ vertex-cut has at least two vertices with the same color. So, any $u'$-$v'$ rainbow vertex-cut must contain vertices $u,v$. Thus, there are two internally disjoint
paths $u'uv'$ and $u'vv'$ in $G$. The vertices $u',v'$ are two common neighbors of $u$ and $v$. Thus, $rvd(G)\leq n-1$, a contradiction.

For the converse, since there are at least two common neighbors for any two vertices of $G$, the colors of vertices in $G$ are pairwise different by Lemma \ref{rvddifcolor}. Therefore, $rvd(G)=n$.
\end{proof}

\begin{cor}
Let $G$ be a nontrivial connected graph of order $n$. If there is exactly one pair of vertices which do not have two common neighbors, then $rvd(G)=n-1$.
\end{cor}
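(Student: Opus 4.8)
The plan is to establish the two inequalities $rvd(G)\le n-1$ and $rvd(G)\ge n-1$ separately, both leaning on the machinery already developed for Theorem~\ref{rvdn}. Let $u$ and $v$ denote the unique pair of vertices having at most one common neighbor, so that every other pair of vertices of $G$ has at least two common neighbors.

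For the upper bound, I would reuse the coloring constructed in the proof of Theorem~\ref{rvdn}: assign color $1$ to both $u$ and $v$, and the distinct colors $2,3,\dots,n-1$ to the remaining $n-2$ vertices, using exactly $n-1$ colors in total. The analysis there shows that such a coloring can fail to be a rainbow vertex-disconnection coloring only if some pair $u',v'$ forces $u$ and $v$ into every rainbow vertex-cut, which in turn forces $u',v'$ to be two common neighbors of $u$ and $v$. Since $u$ and $v$ have at most one common neighbor by hypothesis, no such pair $u',v'$ exists, so the coloring is valid and $rvd(G)\le n-1$. Equivalently, since the pair $(u,v)$ violates the characterizing condition of Theorem~\ref{rvdn}, that theorem already gives $rvd(G)\neq n$, hence $rvd(G)\le n-1$.

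For the lower bound, the key ingredient is Lemma~\ref{rvddifcolor}: in any $rvd$-coloring, any two vertices with at least two common neighbors must receive different colors. Under our hypothesis this applies to every pair except $\{u,v\}$. Consequently the $n-2$ vertices outside $\{u,v\}$ are pairwise differently colored, and each of them is also colored differently from both $u$ and $v$, which already requires $n-2$ colors; moreover $u$ (and likewise $v$) must avoid every one of these, contributing at least one further color. Phrased structurally, the relation compelling distinct colors is the graph $K_n$ with only the edge $uv$ deleted, whose chromatic number is $n-1$; any $rvd$-coloring of $G$ is a proper coloring of this conflict graph and hence uses at least $n-1$ colors. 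Thus $rvd(G)\ge n-1$, and combining with the upper bound yields $rvd(G)=n-1$.

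Neither direction poses a genuine obstacle, as both are light adaptations of the proof of Theorem~\ref{rvdn}. The only point requiring a little care is the counting in the lower bound, where one must confirm that the sole permissible color coincidence is between $u$ and $v$, and verify that even exploiting this coincidence still leaves $n-1$ colors unavoidable.
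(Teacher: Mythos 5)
Your proposal is correct and follows essentially the same route as the paper: the upper bound comes from Theorem~\ref{rvdn} (the pair $u,v$ violates its characterization, so $rvd(G)\neq n$), and the lower bound comes from Lemma~\ref{rvddifcolor} forcing pairwise distinct colors on all pairs except $\{u,v\}$. The paper phrases the counting slightly more compactly by observing that the $n-1$ vertices of $V(G)\setminus\{u\}$ must all receive distinct colors, which is the same clique in your conflict graph $K_n-uv$.
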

\begin{proof}
Let $u$, $v$ be the only one pair of vertices of $G$ which do not have two common neighbors. Since any two vertices in $V(G)\setminus\{u\}$ have at least two common neighbors, the colors of vertices in $V(G)\setminus\{u\}$ are pairwise different by Lemma \ref{rvddifcolor}.
So, $rvd(G)\geq n-1$. From Theorem \ref{rvdn}, $rvd(G)\leq n-1$.
\end{proof}

At the end of this section we present a result related to the girth
of a graph. Recall that for a graph $G$, the length of a shortest cycle of $G$ is called the \emph{girth} of $G$ (the girth of an acyclic graph is zero). The following is an upper bound for $rvd(G)$ in terms of the girth of $G$.

\begin{thm}\label{rvdgirth}
Let $G$ be a nontrivial connected graph of order $n$ and girth $g$ with $g\geq 4$. Then $rvd(G)\leq n-g+2$.
\end{thm}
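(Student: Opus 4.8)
The plan is to exhibit an explicit coloring that uses exactly $n-g+2$ colors and then verify it is a rainbow vertex-disconnection coloring. Fix a shortest cycle $C=v_1v_2\cdots v_gv_1$ of $G$; since $g\ge 4$ and $C$ is a shortest cycle, $C$ is chordless (a chord would create a cycle shorter than $g$). I would color the $n-g$ vertices outside $C$ with $n-g$ pairwise distinct colors $3,4,\dots,n-g+2$, and color $C$ exactly as in the proof of Lemma~\ref{rvdcycle}, namely $c(v_1)=c(v_2)=1$ and $c(v_i)=2$ for $3\le i\le g$. This uses $n-g+2$ colors in all, so it remains only to check that $c$ witnesses $rvd(G)\le n-g+2$.

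Two structural facts drive the verification, both coming from the shortest-cycle hypothesis. First, because $C$ is chordless, deleting every vertex outside $C$ leaves on $V(C)$ precisely the edges of $C$. Second, every vertex $w\notin C$ has at most two neighbours on $C$ (three neighbours would force a cycle of length at most $2+\lfloor g/3\rfloor<g$), and when it has exactly two they are non-consecutive on $C$ (only $g=4$ allows this, and then they are opposite) and hence receive different colors under $c$; for $g\ge 5$ such a $w$ has at most one neighbour on $C$. Consequently the neighbourhood $N(w)$ of any off-cycle vertex $w$ is rainbow: its off-cycle part is rainbow by construction, and its at most two cycle neighbours carry distinct colors distinct from the off-cycle ones.

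The heart of the argument is a two-regime case analysis for a pair $x,y$. If at least one of them, say $y$, lies off $C$, I simply isolate it: for nonadjacent $x,y$ the set $S=N(y)$ is a rainbow $x$-$y$ vertex-cut (it separates $x$ from $y$ since $y$ becomes isolated, and it is rainbow by the fact above), while for adjacent $x,y$ the set $S=N(y)\setminus\{x\}$ works with $S+y$ rainbow. If instead both $x,y$ lie on $C$, I push all off-cycle vertices into the cut for free (they are pairwise distinctly colored, and distinct from the two cycle colors) and then finish inside the chordless cycle: for nonadjacent $x,y$ I add one internal vertex from each of the two arcs of $C$, chosen of different colors; for adjacent $x,y$ I delete the edge $xy$ and add a single internal vertex of the remaining arc whose color differs from $c(x)$, so that $S+x$ is rainbow. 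Using chordlessness, in each subcase the resulting set genuinely separates $x$ from $y$ and is rainbow.

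The main obstacle is to guarantee the required color choices in the both-on-cycle case, and this is exactly where the placement of the color-$1$ vertices is used. For nonadjacent $x,y$ one must show the two arcs always admit internal vertices of different colors; the key point is that the only color-$1$ vertices $v_1,v_2$ are consecutive, so they lie on a single arc, whence that arc supplies a color-$1$ vertex and the other arc (containing only vertices of color $2$) supplies a color-$2$ vertex. For adjacent $x,y$ one must find, on the unique remaining arc of length $g-1\ge 3$, an internal vertex whose color differs from $c(x)$, which again follows from where $v_1,v_2$ sit. Together with the rainbow-neighbourhood fact, these choices finish the verification, the remainder being routine bookkeeping over which of $x,y$ lie on $C$ and whether they are adjacent.
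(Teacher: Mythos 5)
Your proposal is correct and follows essentially the same route as the paper's proof: the identical coloring (two colors on the shortest cycle placed as in Lemma~\ref{rvdcycle}, distinct colors elsewhere), isolating an off-cycle vertex by its rainbow neighbourhood, and, when both vertices lie on the cycle, taking all off-cycle vertices together with a rainbow cut inside the cycle. The only difference is that you re-derive the cycle case and the rainbow-neighbourhood fact with slightly more structural detail (chordlessness, bounding the number of neighbours on $C$) where the paper cites Lemma~\ref{rvdcycle} and argues directly that $N(x)$ is rainbow.
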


\begin{proof}
Let $C_g$ denote a shortest cycle of $G$ where $C_g=v_1v_2\cdots v_gv_1$. Define a vertex-coloring $c$: $V(G)\rightarrow [n-g+2]$ of $G$ as follows. Let $c(v_1)=c(v_2)=1$, $c(v_i)=2$ $(i\in\{3,4,\cdots,g\})$.
Color the remaining vertices of $G$ with distinct colors using
$3,4,\cdots,n-g+2$. Let $x$ and $y$ be two vertices of $G$. Assume that $x$ and $y$ are both in $C_g$. There exists an $x$-$y$ rainbow vertex-cut of $C_g$ by Lemma \ref{rvdcycle}. We denote it by $S$. Since the vertices in $V(G)\setminus V(C_g)$ have different colors, $V(G)\setminus V(C_g)\cup S$ is an $x$-$y$ rainbow vertex-cut of $G$.

Assume that vertices $x$ and $y$ are not both in $C_g$, say $x\notin V(C_g)$. Then $N(x)$ is a rainbow subset; otherwise, $\{v_1,v_2\}\subset N(x)$ or there exist two vertices $v_{i}$, $v_{j}$ $(i,j\in\{3,4,\cdots,g\})$ with color $2$ which are both the neighbors of $x$. Considering the former, since $v_1$ and $v_2$ are adjacent, there
exists a triangle $xv_1v_2x$, a contradiction. As for the latter, since the length of $v_{i}v_{i+1}\cdots v_{j}$ in $C_g$ is less than $g-2$, the length of cycle $xv_{i}v_{i+1}\cdots v_{j}x$ is less than $g$, a contradiction. Then $N(x)\setminus\{y\}$ is an $x$-$y$ rainbow vertex-cut. So, $c$ is a rainbow vertex-disconnection coloring.
\end{proof}

According to Theorem \ref{rvdgirth} and $rvd(P_3)=1$, we can get the following corollary. Since $rvd(K_{2,n-2})=n-2$, the upper bound
for triangle-free graphs is tight.

\begin{cor}\label{c1}
Let $G$ be a nontrivial connected triangle-free graph of order $n\geq3$. Then $rvd(G)\leq n-2$.
\end{cor}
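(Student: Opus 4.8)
The plan is to dispose of this corollary by a short case analysis keyed to whether the triangle-free graph $G$ contains a cycle, since ``triangle-free'' means precisely that $G$ has no $C_3$, hence its girth is either $0$ (acyclic) or at least $4$.

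First I would handle the acyclic case. If $G$ contains no cycle, then $G$ is a tree, so Theorem \ref{rvd1} gives $rvd(G)=1$. Since $n\geq 3$, we have $1\leq n-2$, and the bound holds; the extreme instance here is $rvd(P_3)=1=n-2$ at $n=3$, which is exactly the boundary case flagged in the statement.

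Next I would handle the case where $G$ contains a cycle. Because $G$ is triangle-free, its shortest cycle has length $g\geq 4$, so the girth hypothesis of Theorem \ref{rvdgirth} is satisfied. Applying that theorem yields $rvd(G)\leq n-g+2$, and since $g\geq 4$ we get $n-g+2\leq n-2$, as required.

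Combining the two cases gives $rvd(G)\leq n-2$ for every nontrivial connected triangle-free graph of order $n\geq 3$. I do not anticipate any genuine obstacle: the only point requiring care is that Theorem \ref{rvdgirth} is stated only for $g\geq 4$ and therefore does not cover acyclic graphs (girth $0$), which is precisely why the tree case must be treated separately via Theorem \ref{rvd1}. The mention of $rvd(K_{2,n-2})=n-2$ is not needed for the inequality itself; it merely certifies that the bound cannot be improved.
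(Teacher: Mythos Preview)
Your proposal is correct and follows essentially the same approach as the paper: split into the acyclic case (handled by Theorem~\ref{rvd1}, with $P_3$ being the tight instance the paper singles out) and the cyclic case (girth $g\geq 4$, handled by Theorem~\ref{rvdgirth}). Your write-up is in fact slightly more explicit than the paper's one-line justification, particularly in covering all trees rather than just mentioning $P_3$.
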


\section{$rvd$-values for some special graphs }

We present some special graphs $G$ satisfying $rvd(G)=\kappa(G)$ and $rvd(G)=n$, respectively. Firstly, we consider the rainbow vertex-disconnection number of a wheel graph.

\begin{thm}\label{p3}
If $W_{n}=C_{n}\vee K_{1}$ is the wheel of order $n+1\geq 5$, then
$$rvd(W_{n})=\left\{
\begin{array}{lcl}
3,       &      & {if~4\mid n},\\
4,         &      & {if~4\nmid n}.
\end{array} \right .$$.
\end{thm}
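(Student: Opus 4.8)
The plan is to split the argument into a lower bound driven entirely by Lemma~\ref{rvddifcolor} and a matching upper bound given by an explicit coloring. Write $w$ for the hub (the $K_1$) and $C_n=v_1v_2\cdots v_nv_1$ for the rim, with indices read modulo $n$. Since $n\ge 4$, the hub $w$ and any rim vertex $v_i$ share the two distinct common neighbors $v_{i-1}$ and $v_{i+1}$, and any two rim vertices $v_i,v_{i+2}$ at cyclic distance two share the two common neighbors $w$ and $v_{i+1}$. Hence, by Lemma~\ref{rvddifcolor}, in every $rvd$-coloring the hub receives a color distinct from all rim colors, and $c(v_i)\ne c(v_{i+2})$ for all $i$. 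Consequently the restriction of any $rvd$-coloring to the rim is a proper coloring of the auxiliary ``distance-two'' graph $D$ on $\{v_1,\dots,v_n\}$ in which $v_i\sim v_j$ iff $i-j\equiv\pm2\pmod n$, and the hub needs one further color. This yields $rvd(W_n)\ge \chi(D)+1$.

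Next I would identify $D$ and compute $\chi(D)$. Since stepping by $2$ generates $\mathbb{Z}_n$ exactly when $n$ is odd, $D$ is a single cycle $C_n$ when $n$ is odd (so $\chi(D)=3$), and is a disjoint union of two cycles $C_{n/2}$ (on the odd- and even-indexed rim vertices) when $n$ is even. When $4\mid n$ these two cycles have even length $n/2$, so $\chi(D)=2$; when $n\equiv 2\pmod 4$ they have odd length, so $\chi(D)=3$. Combining with the previous paragraph gives the lower bounds $rvd(W_n)\ge 3$ for $4\mid n$ and $rvd(W_n)\ge 4$ for $4\nmid n$, matching the claimed values.

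For the upper bound I would exhibit a coloring using exactly $\chi(D)+1$ colors and verify it is a rainbow vertex-disconnection coloring. Color $w$ with a fresh color, and color the rim by a proper coloring of $D$: for $4\mid n$ use the period-four pattern $1,1,2,2,1,1,2,2,\dots$ (which closes up consistently precisely because $4\mid n$ and satisfies $c(v_i)\ne c(v_{i+2})$), and for $4\nmid n$ use any proper $3$-coloring of the odd cycle(s) forming $D$. The crucial structural observation is that $w$ is adjacent to every rim vertex, so any two \emph{nonadjacent} vertices are both on the rim; to separate nonadjacent $v_i$ and $v_j$ I would simply delete $N(v_j)=\{v_{j-1},v_{j+1},w\}$, which isolates $v_j$ and is rainbow because $v_{j-1},v_{j+1}$ are at distance two (hence differently colored) and $w$ carries the fresh color. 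For \emph{adjacent} pairs I would again isolate an endpoint after deleting the joining edge: to separate $w$ and $v_i$ take $S=\{v_{i-1},v_{i+1}\}$ (then $S+w$ is rainbow), and to separate $v_i$ and $v_{i+1}$ take $S=\{w,v_{i+2}\}$ (then $S+v_i$ is rainbow, since $v_i,v_{i+2}$ are at distance two). In each case $S$ is an $x$-$y$ rainbow vertex-cut, so the coloring works.

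The main obstacle, and the only genuinely delicate point, is the parity bookkeeping for the rim: one must verify both that the distance-two graph $D$ really decomposes as described (so that $\chi(D)$ jumps between $2$ and $3$ exactly according to $4\mid n$ versus $4\nmid n$) and that the explicit periodic rim coloring in the $4\mid n$ case wraps around the cycle without a clash. Everything else---the lower bound from Lemma~\ref{rvddifcolor} and the cut-verification by neighborhood-isolation---is uniform across all cases, so once the parity of $\chi(D)$ is pinned down the two bounds meet and the theorem follows. A minor point to check alongside is that the small cases ($n=4,5,6$) are covered by the same construction, with $W_3=K_4$ excluded by the hypothesis that the order is at least $5$.
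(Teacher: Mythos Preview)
Your proof is correct and follows essentially the same approach as the paper: both arguments use Lemma~\ref{rvddifcolor} to force $c(w)\ne c(v_i)$ and $c(v_i)\ne c(v_{i+2})$ for the lower bound, and both exhibit the same period-four rim pattern $1,1,2,2,\ldots$ (plus a fresh hub color) for the upper bound when $4\mid n$, with verification via rainbow neighborhoods on the rim. Your packaging of the lower bound through the auxiliary distance-two graph $D$ and its chromatic number is a tidy unification---the paper instead treats the two cases separately, invoking $\kappa(W_n)=3$ when $4\mid n$ and running the alternating-color contradiction directly when $4\nmid n$---but the underlying content is the same; likewise, the paper's upper-bound verification uses the single uniform cut $N(x)\setminus\{y\}$ for $x$ on the rim (with $S+y=N(x)$ rainbow in the adjacent case), whereas you split into three adjacency cases with slightly smaller cuts, which is equally valid.
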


\begin{proof}
Suppose that $C_{n}$=$v_{1}v_{2}\cdots v_{n}v_{1}$ and $w$ is the copy
of $K_{1}$. We distinguish the following cases.

\textbf{Case 1.} $4\mid n$.

Since $\kappa(W_{n})=3$, it follows from Theorem \ref{rvdlocalconn} that $rvd(W_n)\geq 3$. Let $c$ be a vertex-coloring of $W_{n}$ such that $c(v_{i})=1$ ($i \equiv 1,2~(mod~4)$) and $c(v_{i})=2$
($i \equiv 0,3~(mod~4)$) and $c(w)=3$. Let $x$ and $y$ be two vertices of $W_{n}$. Then one of $x$ and $y$ belongs to $C_{n}$, say $x\in C_n$.
Since $4\mid n$, $N(x)$ is a rainbow subset. The vertex set $N(x)\setminus\{y\}$ is an $x$-$y$ rainbow vertex-cut of $W_{n}$.
Thus, $c$ is a rainbow vertex-disconnection coloring of $W_{n}$ using three colors, and so $rvd(W_{n})\leq 3$.

\textbf{Case 2.} $4\nmid n$.

Let $c$ be an $rvd$-coloring of $W_n$. Assume that the number of colors in $C_n$ is $2$. Without loss of generality, let $c(v_1)$=1. Since $v_1$ and $v_3$ have two common neighbors $v_2$, $w$, the colors of $v_1$ and $v_3$ are different by Lemma \ref{rvddifcolor}. So, $c(v_3)=2$. Similarly, we color $v_5,v_7,\cdots$ alternately by color $1,2$. Finally,  we obtain $c(v_{n-1})=c(v_1)=1$, which is a contradiction by Lemma \ref{rvddifcolor}. So, the number of colors in $C_n$ is at least $3$.
Since $w$ and $v_i$ ($i\in [n]$) have two common neighbors, the colors of $w$ and $v_i$ are different by Lemma \ref{rvddifcolor}. So, $rvd(G)\geq 4$.

Define a vertex-coloring $c: V(G)\rightarrow [4]$ such that $c(v_{i})=1$ ($i \equiv 1,2~(mod~4),i\in [n-2]$), $c(v_{i})=2$ ($i \equiv 0,3~(mod~4),i\in [n-2]$), $c(v_{n-1})=c(v_n)=3$ and $c(w)=4$.
Let $x$ and $y$ be two vertices of $G$. Then one of $x$ and $y$ belongs to $C_{n}$, say $x\in V(C_n)$. Then the colors of vertices in $N(x)$ are distinct. The vertex set $N(x)\setminus\{y\}$ is an $x$-$y$ rainbow vertex-cut of $W_{n}$. Thus, $rvd(W_{n})\leq 4$.
\end{proof}

From the above, we see that $\kappa(W_{n})=rvd(W_{n})$ if $4\mid n$. Next, we determine the rainbow vertex-disconnection number of a complete multipartite graph. Furthermore, we know that there are graphs $G$ with $rvd(G)=n$.

\begin{thm}\label{t1}
Let $G=K_{n_{1},n_{2},\ldots,n_{k}}$ be a complete $k$-partite graph of order $n$, where $k\geq 2$, $1\leq n_{1}\leq n_{2} \leq\cdots \leq n_{k}$ and $n_{k}\geq 2$. Then
$$rvd(K_{n_{1},n_{2},\ldots,n_{k}})=\left\{
\begin{array}{lcl}
n,       &      & {if~k\geq 4~or~k=3,~n_3\geq n_2\geq n_1\geq2},\\
n-n_{k-1},         &      & {if~k=3,~n_1=1~or ~k=2,~n_2\geq n_1\geq 2},\\
1,         &      & {if ~k=2~and~n_1=1 }.
\end{array} \right .$$
\end{thm}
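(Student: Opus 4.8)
The plan is to split into the three cases of the statement and, in each, establish matching lower and upper bounds for $rvd$. The central tools are Lemma~\ref{rvddifcolor} (two vertices with two common neighbors need distinct colors), Theorem~\ref{rvdn} (the exact characterization of $rvd(G)=n$), Theorem~\ref{rvd1} ($rvd=1$ iff tree), and Theorem~\ref{rvdlocalconn} ($rvd(G)\ge\kappa^+(G)$). The easy case is $k=2$, $n_1=1$: then $K_{1,n-1}$ is a star, hence a tree, so $rvd=1$ by Theorem~\ref{rvd1}.

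**The case $rvd=n$.** For $k\ge 4$, or $k=3$ with $n_1,n_2,n_3\ge 2$, I would apply Theorem~\ref{rvdn} directly: it suffices to show that \emph{any} two vertices $u,v$ have at least two common neighbors. If $u,v$ lie in different parts, pick any two vertices from a third part (which exists and has the needed size under these hypotheses); if $u,v$ lie in the same part, any two vertices of any other part are common neighbors. The hypotheses on $k$ and the part sizes are exactly what guarantee that such a witnessing part with two available vertices always exists, so verifying this is a short case check on which part $u,v$ occupy.

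**The case $rvd=n-n_{k-1}$.** This is the substantive case, covering $k=3,n_1=1$ and $k=2,n_2\ge n_1\ge 2$. For the lower bound I would identify which pairs of vertices fail to have two common neighbors and show all the remaining vertices must get pairwise distinct colors via Lemma~\ref{rvddifcolor}; the vertices forced to share colors are exactly those in the part of size $n_{k-1}$ (paired off with the opposite part), yielding $rvd\ge n-n_{k-1}$. For the upper bound I would exhibit an explicit coloring using $n-n_{k-1}$ colors: give all vertices of the part of size $n_{k-1}$ the same color as corresponding vertices in another part (reusing $n_{k-1}$ colors), and give everyone else a fresh color, then verify that for every pair $x,y$ the neighborhood-based cut $N(x)\setminus\{y\}$ (or an analogous rainbow cut) separates them rainbow-ly.

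**Main obstacle.** The hard part will be the upper-bound coloring in the $n-n_{k-1}$ case: I must choose precisely \emph{which} vertices share colors so that no resulting monochromatic pair ever lies inside a forced rainbow cut for some $x,y$. The delicate point is the adjacency case, where a cut $S$ need only make $S+x$ or $S+y$ rainbow; I would check that the two vertices sharing a color are never simultaneously required to appear (with a conflicting endpoint) in a single cut. Handling the asymmetry between the $k=3,n_1=1$ subcase and the $k=2$ subcase, and confirming that $n-n_{k-1}$ is genuinely achievable rather than merely a lower bound, is where the careful bookkeeping lies.
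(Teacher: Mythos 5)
Your proposal is correct and follows essentially the same route as the paper: a three-way case split, Theorem~\ref{rvd1} for the star, Theorem~\ref{rvdn} for the $rvd=n$ case, and for the middle case the same coloring that reuses the $n_{k-1}$ colors across two parts together with the cut $N(x)\setminus\{y\}$ for a vertex $x$ whose neighborhood is rainbow. The only deviation is that you derive the lower bound $n-n_{k-1}$ from Lemma~\ref{rvddifcolor} (counting which pairs are permitted to share a color), whereas the paper gets it more quickly from $rvd(G)\geq\kappa_G(u,v)$ via Theorem~\ref{rvdlocalconn}; both arguments are valid.
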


\noindent $Proof.$ Let $V_{1},V_{2},\cdots,V_{k}$ be the
vertex-partition sets of $G$ with $|V_{i}|=n_{i}$ where $i\in[k]$. We distinguish the following cases to proceed the proof.

\textbf{Case 1}. $k\geq 4$ or $k=3$, $n_3\geq n_2\geq n_1\geq2$.

Let $u$ and $v$ be any two vertices of $G$. Assume $u\in V_i$ and $v\in V_j$ ($i,j\in [k]$). If $k\geq 4$ or $k=3$, $n_3\geq n_2\geq n_1\geq2$, then $|V(G)\setminus \{V_i\cup V_j\}|\geq2$. So, $u$ and $v$ have at least two common neighbors. From Theorem \ref{rvdn}, $rvd(G)=n$.

\textbf{Case 2}. $k=3$, $n_1=1$ or $k=2$, $n_2\geq n_1\geq 2$.

If $k=3$, $n_1=1$, then choose $u\in V_1$ and $v\in V_2$. From Theorem \ref{rvdlocalconn}, $rvd(G)\geq \kappa(u,v)=n_3+1=n-n_2$. Define a vertex-coloring $c: V(G)\rightarrow [n-n_2]$ such that $c(V_3)=\{1,2,\cdots,n_3\}$, $c(V_2)=\{1,2,\cdots,n_2\}$ and
$c(V_1)=\{n_{3}+1\}$. Then $c$ is a coloring using $n_{3}+1=n-n_2$ colors. Let $x$ and $y$ be any two vertices of $G$. Then one of $x$ and  $y$ belongs to $V_2\cup V_3$, say $x\in V_2\cup V_3$. Then $N(x)$ is a rainbow subset. So, $N(x)\setminus\{y\}$ is an $x$-$y$ rainbow vertex-cut. Thus, $rvd(G)\leq n-n_2$.

If $k=2, n_2\geq n_1\geq 2$, then choose two vertices $u$ and $v$ of $V_1$. From Theorem \ref{rvdlocalconn}, $rvd(G)\geq \kappa_{G}(u,v)=n_2=n-n_1$. Define a vertex-coloring $c': V(G)\rightarrow [n-n_1]$ such that $c'(V_2)=\{1,2,\cdots,n_2\}$ and $c'(V_1)=\{1,2,\cdots,n_1\}$. Let $x$ and $y$ be any two vertices of $G$. Then $N(x)$ is a rainbow subset. So, $N(x)\setminus\{y\}$ is an $x$-$y$ rainbow vertex-cut. Thus, $rvd(G)\leq n-n_1$.

\textbf{Case 3}. $k=2$ and $n_1=1$.

Obviously, the graph $G$ is a tree. From Theorem \ref{rvd1}, $rvd(G)=1$.\hfill$\Box$\\

\section{Extremal problems }

In this section, we first investigate the following problem:

For a given pair $k$, $n$ of positive integers with $1\leq k\leq n$, what is the minimum possible size of a connected graph $G$ of order $n$ such that the rainbow vertex-disconnection number of $G$ is $k$ ?

To solve this problem, we first present some useful lemmas.
\begin{lem}\label{rved-dele-c}
Let $G$ be a connected graph with $\delta(G)\geq 3$. Then there exists a cycle $C$ such that $G-V(C)$ is connected.
\end{lem}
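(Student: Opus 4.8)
My plan is to use an extremal argument. Consider the family $\mathcal{H}$ of all nonempty connected induced subgraphs $H$ of $G$ for which $G-V(H)$ contains a cycle, and let $H$ be a member of $\mathcal{H}$ with $|V(H)|$ maximum. This family is nonempty: for any single vertex $v$, the graph $G-v$ has minimum degree at least $\delta(G)-1\ge 2$ and hence contains a cycle, so the one-vertex subgraph $G[\{v\}]$ lies in $\mathcal{H}$. I would write $R=G[V(G)\setminus V(H)]$, fix a cycle $C_0$ in $R$, and set $W=V(R)\setminus V(C_0)$ and $B=N_G(V(H))\cap V(R)$. The goal is to show that at the maximum one may take $V(C_0)=V(R)$, i.e. $C_0$ is a spanning cycle of $R$; then $G-V(C_0)=H$ is connected and $C=C_0$ is the required cycle.

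First I would record the structural facts forced by maximality. $R$ is connected: otherwise a component $R'$ of $R$ not meeting $C_0$ is adjacent to $H$, so $G[V(H)\cup V(R')]$ is a strictly larger member of $\mathcal{H}$, a contradiction. No vertex of $W$ is adjacent to $H$: if $w\in W$ were adjacent to $H$, then $H+w$ is connected while $R-w$ still contains $C_0$, again contradicting maximality; consequently every $w\in W$ keeps all of its neighbours inside $R$, so $\deg_R(w)=\deg_G(w)\ge 3$. The same idea yields the key enlargement move: if some $b\in B$ lies on no cycle of $R$ (equivalently $R-b$ still contains a cycle), then $H+b$ is a larger member of $\mathcal{H}$. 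Hence I may assume from now on that every vertex of $B$ lies on every cycle of $R$.

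I would then analyse the block structure of $R$. A pendant vertex of $R$ has degree $1$, so it cannot lie in $W$ and must lie in $B$; but a pendant vertex lies on no cycle, which by the enlargement move is impossible. Thus $\delta(R)\ge 2$, and in particular every leaf block of $R$ is $2$-connected. If the block-cut tree of $R$ had two distinct leaf blocks, each would have to contain all of $B$ while the two leaf blocks share at most one vertex, forcing $B$ to be a single vertex or empty; these degenerate configurations are then killed by a short count, since if $B=\{b\}$ the acyclic graph $R-b$ is a nonempty forest all of whose vertices lie in $W$ and hence have degree at least $2$, which is impossible. This drives $R$ toward being $2$-connected.

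The crux---and the step I expect to be the main obstacle---is the remaining case in which $R$ is $2$-connected but is not itself a cycle while $W\ne\emptyset$. Here a spanning cycle is not immediately available, and the standing assumption that every vertex of $B$ lies on every cycle must be contradicted. I would first note that a $2$-connected graph which is not a cycle always has a vertex whose deletion still leaves a cycle (take a cycle $C_1$; if $C_1$ is not spanning delete a vertex off it, and otherwise use a chord of $C_1$ to build a shorter cycle missing an interior vertex of the complementary arc). Combining this with the bound $\deg_R\ge 3$ on $W$ and with the observation that, for each $b\in B$, every leaf of the tree $R-b$ must again fall in $B$, I would descend on $|V(R)|$ by suppressing degree-$2$ vertices that lie on every cycle until a contradiction with $W\ne\emptyset$ is reached. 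Making this descent uniform, so that it cleanly rules out $W\ne\emptyset$ in all of the $2$-connected subcases, is where the genuine difficulty lies; the reductions preceding it are routine.
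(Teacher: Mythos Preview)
Your extremal framework is sound up to the reduction to the case where $R=G-V(H)$ is $2$-connected, but the final step---showing that a $2$-connected $R$ in which every vertex of $B=N_G(V(H))\cap V(R)$ lies on every cycle and every vertex of $V(R)\setminus B$ has degree at least $3$ must itself be a cycle---is left as a vague ``descent by suppressing degree-$2$ vertices,'' and this is a genuine gap. (There is also a small slip: ``$b$ lies on no cycle'' is not equivalent to ``$R-b$ still contains a cycle''; the correct equivalence, which is what you actually use, is with ``$b$ does not lie on every cycle.'') The gap can in fact be closed by a short edge count rather than a descent: for each $b\in B$ the graph $R-b$ is a tree, so $|E(R)|=|V(R)|-2+\deg_R(b)$, forcing all vertices of $B$ to have the same degree $d$; summing degrees then gives $2(|V(R)|-2+d)\ge d|B|+3(|V(R)|-|B|)$, which together with $|B|\ge 2$ forces $d=2$ and $V(R)=B$, whence $R$ is $2$-regular and hence a cycle.

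The paper avoids this analysis altogether by taking the dual extremal choice: instead of maximizing a connected $H$ whose complement contains a cycle, it chooses a cycle $C_0$ so that $G-V(C_0)$ has a component $G_1$ of maximum order, and then produces a contradicting cycle directly. Letting $G_2$ be another component and $S_i$ the attachment vertices of $G_i$ on $C_0$, one fixes $x_0\in S_1$ and either (i) finds two vertices $x_1,x_2\in S_2\setminus\{x_0\}$ and routes a new cycle through the $x_1$--$x_2$ arc of $C_0$ avoiding $x_0$ and a path inside $G_2$, or (ii) when $S_2$ is too small, uses $\delta(G)\ge 3$ to locate a cycle entirely inside $G_2$. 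In each case the new cycle misses $x_0$, so $G_1\cup\{x_0\}$ lies in a single component of the complement, contradicting the maximality of $|V(G_1)|$. This is shorter and sidesteps the block-tree and $2$-connected structural analysis your approach requires.
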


\begin{proof}
Suppose that for any cycle $C$ of $G$, $G-V(C)$ is disconnected. Choose a cycle $C_0$ such that $G-V(C_0)$ has a connected component $G_1$ with the maximum order. Let $G_2$ be another component. We denote the ends of $E(G_i,C_0)$ in $C_0$ by $S_i$ $(i=1,2)$. Let $x_0\in S_1$. Now we distinguish two cases.

\textbf{Case 1}. $|S_2|=1$.

Since $\delta(G_2)\geq 2$, there exists a cycle $C'$ in $G_2$ such that $G-V(C')$ has a component containing $G_1$ and $C_0$, a contradiction.

\textbf{Case 2}. $|S_2|\geq2$.

{\em Subcase $2.1$}. For $|S_2\setminus \{x_0\}|\geq 2$, we assume $\{x_1,x_2\}\in S_2$ and $x_1y_1, x_2y_2\in E(G_2,C_0)$.
There exists a path $Q_2$ between $y_1$ and $y_2$. We note that there is an $x_1$-$x_2$ path $Q_1$ in $C_0$ which does not go through $x_0$.
Choose a cycle $C'=x_1Q_1x_2y_2Q_2y_1x_1$. Then $G-V(C')$ has a component containing $C_0$ and $G_1$, a contradiction.

{\em Subcase $2.2$}. For $|S_2\setminus \{x_0\}|=1$, we assume $S_2=\{x_0,x_1\}$. When $|N(x_1)\cap V(G_2)|\geq 2$, we assume $z_1,z_2\in N(x_1)\cap V(G_2)$.
Then there is a $z_1$-$z_2$ path $Q$ in $G_2$. Choose a cycle $C'=x_1z_1Qz_2x_1$. Then $G-V(C')$ has a component containing $x_0$ and $G_1$, a contradiction.
When $|N(x_1)\cap V(G_2)|=1$, we assume $N(x_1)\cap V(G_2)=\{z_1\}$. For any vertex $v\in G_2\setminus \{z_1\}$, the degree of $v$ in $G_2$ is at least $2$. Since a tree has at least two leaves, there is a cycle $C'$ in $G_2$. Then $G-V(C')$ has a component containing $C_0$ and $G_1$, a contradiction.
\end{proof}

\begin{cor}\label{rvd-cyclecon}
Let $G$ be a connected graph with $\delta(G)\geq 3$.
Then there exists a cycle $C$ such that $G-E(C)$ is connected.
\end{cor}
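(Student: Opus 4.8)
The plan is to deduce the statement from Lemma \ref{rved-dele-c}, which supplies a cycle $C$ with $H:=G-V(C)$ connected. First I would record an easy sufficient condition: if every vertex of $C$ has a neighbour in $V(G)\setminus V(C)$, then $G-E(C)$ is connected. Indeed, deleting $E(C)$ destroys no edge of the connected subgraph $H$ nor any edge joining $V(C)$ to $H$, so it is convenient to contract $H$ to a single hub vertex $h$; then $G-E(C)$ is connected if and only if, after deleting the cycle edges, every vertex of $C$ can reach $h$ through the surviving chords and $C$--$H$ edges. When each cycle vertex has an $H$-neighbour this is immediate, since each such vertex is directly joined to $h$.

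The difficulty is that the cycle handed over by Lemma \ref{rved-dele-c} need not have this property, and it genuinely cannot be reused verbatim. A cycle vertex $v$ may have all of its $d(v)\geq 3$ incident edges on $C$ (two cycle edges together with chords), and a whole arc of such vertices can form a ``chord-component'' that, after the deletion of $E(C)$, is attached to neither $h$ nor the rest of $C$; one can build connected graphs with $\delta\geq 3$ in which $G-V(C)$ is connected yet $G-E(C)$ is disconnected for exactly this reason. Hence the core of the proof is to choose, or to repair, the cycle so that no such isolated chord-component survives; equivalently, to produce an edge-non-separating cycle.

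To carry this out I would mirror the extremal mechanism used in the proof of Lemma \ref{rved-dele-c}, but applied to edges: assuming $G-E(C)$ is disconnected for every cycle $C$, pick a cycle $C_0$ for which a largest component $G_1$ of $G-E(C_0)$ has maximum order, let $G_2$ be another component, and note that in $G$ every edge leaving $V(G_2)$ is a cycle edge, so $C_0$ crosses the cut $(V(G_2),\overline{V(G_2)})$ an even number (at least $2$) of times. Using $\delta(G)\geq 3$ I would locate a chord or a short detour inside $G_2$ and splice it with an arc of $C_0$ to form a new cycle $C'$ whose edge set avoids the cut, so that $G-E(C')$ merges $V(G_1)$ with strictly more vertices, contradicting the maximality of $G_1$. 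The degenerate case $V(C)=V(G)$, where $H$ is empty, I would reduce to the generic case by using a chord to replace a cycle arc, thereby peeling at least one vertex off the cycle.

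I expect the main obstacle to be exactly this splicing step: unlike the vertex version, deleting all of $E(C_0)$ can leave the low-degree remnants of $C_0$ inside $G_2$ (a cycle vertex retains only $d(v)-2\geq 1$ edges there), so $G_2$ need not obviously contain a usable cycle, and the case analysis --- paralleling Cases $1$, $2.1$ and $2.2$ of Lemma \ref{rved-dele-c} according to how many times $C_0$ crosses the boundary and whether the crossing vertices coincide --- must be set up so that the rerouted cycle $C'$ is genuinely a cycle and strictly enlarges the largest component.
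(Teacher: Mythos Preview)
Your plan is workable in principle but far more elaborate than what the paper does. The paper simply selects, among all cycles $C$ with $G-V(C)$ connected (Lemma~\ref{rved-dele-c} supplies at least one), one of \emph{minimum length}. Such a $C$ is chordless, since a chord would yield a strictly shorter cycle $C'$ with $G-V(C')$ still connected, contradicting minimality. Once $C$ is chordless, $\delta(G)\ge 3$ forces every vertex of $C$ to have at least one neighbour in $H=G-V(C)$; deleting only the edges of $C$ therefore leaves each cycle vertex attached to the connected subgraph $H$, and $G-E(C)$ is connected.

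Your central worry --- that a cycle vertex might have all of its $\ge 3$ incident edges on $C$ via chords --- is thus dispatched by the single word ``minimum''; there is no need to rerun the extremal machinery of Lemma~\ref{rved-dele-c} in an edge-deletion version, nor to carry out the splicing case analysis you anticipate. The paper's one-line justification that a chord produces a shorter usable cycle is admittedly terse (the two obvious sub-cycles cut out by the chord need not each work), but fleshing that step out is a far smaller task than the programme you sketch.
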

\begin{proof}
By Lemma \ref{rved-dele-c}, we choose a minimum cycle, say $C$, such that $G-V(C)$ is connected. Then $C$ has no chord; otherwise we can choose a smaller cycle $C'$ such that $G-V(C')$ is connected. Therefore, $G-E(C)$ is connected.
\end{proof}

For any two vertices $x$ and $y$ of $G$, let $S_{G}(x,y)$ be a rainbow vertex-cut of $x$ and $y$ in $G$.
Let $\bar{S}_{G}(x,y)$ be a rainbow vertex set such that if $x,y$ are adjacent, then $\bar{S}_{G}(x,y)=S_{G}(x,y)+x$ or $S_{G}(x,y)+y$ is rainbow; if $x,y$ are nonadjacent, then $\bar{S}_{G}(x,y)=S_{G}(x,y)$ is rainbow. In order to prove that a vertex-coloring of $G$ is a rainbow vertex-disconnection coloring, we only need to find $S_{G}(x,y)$ or $\bar{S}_{G}(x,y)$ for any two vertices $x,y$ of $G$.

\begin{lem}\label{rvd-min1}
For integers $k$ and $n$ with $1\leq k\leq n-1$,
the minimum size of a connected graph $G$ of order
$n$ with $rvd(G)=k$ is $n+k-2$.
\end{lem}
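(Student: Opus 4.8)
The plan is to establish the two halves separately: a family of examples realizing $rvd(G)=k$ with exactly $n+k-2$ edges (showing the minimum is at most $n+k-2$), and the universal estimate $rvd(G)\le m-n+2$ for every connected graph on $n$ vertices with $m$ edges, which together with $rvd(G)=k$ forces $m\ge n+k-2$.

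For the construction I would argue in three cases. If $k=1$, any tree (say $P_n$) has $n-1=n+k-2$ edges and $rvd=1$ by Theorem~\ref{rvd1}. If $2\le k\le n-2$, take $K_{2,k}$, which satisfies $rvd(K_{2,k})=k$ by Theorem~\ref{t1}, and attach a path on the remaining $n-k-2$ vertices at one vertex of $K_{2,k}$; the attached edges form $K_2$-blocks, so Lemma~\ref{rvdblock} keeps the value equal to $k$, while the size is $2k+(n-k-2)=n+k-2$. If $k=n-1$, then $K_{2,n-2}$ already uses all $n$ vertices, so I would instead take $G=K_2\vee\overline{K_{n-2}}$, i.e.\ $K_{2,n-2}$ together with the edge $uv$ joining its two vertices $u,v$ of degree $n-2$. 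Its size is $2(n-2)+1=2n-3=n+k-2$; since $\kappa_G(u,v)=\kappa_{G-uv}(u,v)+1=(n-2)+1=n-1$ we get $rvd(G)\ge\kappa^+(G)\ge n-1$ by Theorem~\ref{rvdlocalconn}, and since $u$ and any $w_i$ share the single common neighbour $v$, Theorem~\ref{rvdn} gives $rvd(G)\le n-1$; hence $rvd(G)=n-1$.

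For the lower bound I would prove $rvd(G)\le m-n+2$ for every connected $G$ by induction on $m$. The base case $m=n-1$ is a tree, where $rvd=1=m-n+2$. For the step, note $m\ge n$ forces a cycle, so $G$ has a non-bridge edge $e=xy$ with $G-e$ connected (Corollary~\ref{rvd-cyclecon} supplies a convenient such edge when $\delta(G)\ge 3$, while a pendant vertex can first be deleted without changing $rvd$ or $m-n$ by Lemma~\ref{rvdblock}). By induction $rvd(G-e)\le (m-1)-n+2=m-n+1$, so the whole argument rests on the claim that \emph{adding one edge raises $rvd$ by at most one.} I would prove this by taking an $rvd$-coloring $c'$ of $G-e$ and recoloring $x$ with one brand-new color; recoloring a single vertex with a fresh color preserves the rainbow property of every vertex set, so every $u$–$v$ rainbow vertex-cut of $G-e$ stays rainbow, and one checks it remains a cut in $G$ unless the new edge $xy$ reconnects the two sides, in which case adjoining an endpoint of $e$ (now bearing the unique fresh color) repairs it. One may additionally use Lemma~\ref{rvdblock} and the additivity of the cycle rank over blocks to keep $e$ inside a single block, which localizes the cases. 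Finally, $rvd(G)=k$ together with $rvd(G)\le m-n+2$ yields $m\ge n+k-2$, matching the construction.

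The main obstacle is precisely the edge-addition claim, and within it the residual case where the far endpoint $y$ of the new edge lies on the same side as $v$ for a pair $(x,v)$ having $x$ as the recolored endpoint: here the inherited cut fails and one must produce a \emph{rainbow} $x$–$v$ cut of the augmented graph, controlling the color of the vertex ($y$, or a separator near $v$) that is forced into the cut. I expect this to require either a careful choice of which endpoint receives the new color together with an analysis of minimal separators of $x$ from $v$, or an alternative global argument via an ear decomposition of each $2$-connected block, coloring the initial cycle with two colors (as in Lemma~\ref{rvdcycle}) and each of the $m-n$ ears with one additional color while keeping consecutive internal ear-vertices distinctly colored, so that a degree-two interior vertex always admits a rainbow cut of its two neighbors. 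Either route gives the bound $m-n+2$ on the color count, but verifying that the resulting coloring is a genuine rainbow vertex-disconnection coloring for \emph{all} pairs is the delicate part of the proof.
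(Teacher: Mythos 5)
Your construction half is fine and essentially coincides with the paper's extremal graphs: your $K_{2,k}$ with a pendent path is the paper's $G_k$, and $K_2\vee\overline{K_{n-2}}$ is its $G_{n-1}=G_{n-2}+uv$; the appeals to Theorem~\ref{t1}, Lemma~\ref{rvdblock}, Theorem~\ref{rvdlocalconn} and Theorem~\ref{rvdn} are all legitimate. The problem is the lower bound. Your entire induction rests on the claim that $rvd(G'+e)\le rvd(G')+1$, and you do not prove it; you only sketch the recoloring (give one endpoint $x$ of $e=xy$ a fresh color) and then explicitly concede that it breaks for a pair $(x,v)$ when $y$ falls on the $v$-side of the inherited cut $S_{G'}(x,v)$. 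That is not a routine verification left to the reader: in that case $S\cup\{y\}$ need not be rainbow, $x$ cannot be placed into its own cut, and nothing in the coloring controls the colors of the vertices you would need to add near $v$. As written, the central step of the lower bound is an open claim, so the proof is incomplete.

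The paper avoids this trap by never adding a single edge. It inducts on $k$ and, in the main case $\delta(G)\ge 3$, uses Lemma~\ref{rved-dele-c} and Corollary~\ref{rvd-cyclecon} to find a chordless cycle $C$ with $G-V(C)$ (hence $G-E(C)$) connected, deletes all $\ell=|C|$ edges of $C$ at once, and pays $\ell$ brand-new \emph{distinct} colors for the $\ell$ vertices of $C$. Then for any pair $x,y$ the set $S_{G'}(x,y)\cup\bigl(V(C)\setminus\{x,y\}\bigr)$ is automatically rainbow and automatically a cut (the only surviving edge of $C$ can be $xy$ itself, which the adjacent-pair definition already discards), so the problematic ``the pair contains an endpoint of the new edge'' situation never produces an unrepairable cut: every cycle vertex other than $x,y$ simply goes into the cut. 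The budget works out because deleting $\ell$ edges buys $\ell$ new colors via the induction hypothesis. Low-degree vertices are handled by separate reductions (deleting an edge at a non-cut degree-$2$ vertex, contracting degree-$2$ cut vertices, stripping pendent trees). If you want to salvage your one-edge-at-a-time route, you would have to actually prove the edge-addition lemma or switch to the paper's cycle-deletion argument; your alternative suggestion via ear decompositions is also unproven and faces the same difficulty of certifying rainbow cuts for all pairs, not just those involving a degree-two ear vertex.
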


\begin{proof}
First, we show that if the size of a connected
graph $G$ of order $n$ is $n+k-2$, then $rvd(G)\leq k$.
By induction on $k$. For $k=1$, the result is true by
Theorem \ref{rvd1}. Suppose the result holds for
$2\leq k\leq n-2$. Let $G$ be a connected
graph of order $n$ and size $n+(k+1)-2=n+k-1$.
We show that $rvd(G)\leq k+1$.
Now we proceed the proof by distinguishing the following three cases.

\textbf{Case 1}. $\delta(G)\geq 3$.

Let $|C|=\ell$ $(\geq 3)$ and $G'=G-E(C)$.
Then by Corollary \ref{rvd-cyclecon}, $G'$ is a connected graph with $|V(G')|=n$ and $|E(G')|=n+k-1-\ell$. By the induction hypothesis, we have $rvd(G')\leq k+1-\ell$.
Suppose the coloring $c'$ of $G'$ is a rainbow vertex-disconnection coloring using $k+1-\ell$ colors.
We now extend the coloring $c'$ of $G'$ to a coloring $c$ of $G$ by assigning $c(x)=c'(x)$ for
$x\in V(G)\setminus V(C)$ and assigning $\ell$ distinct new colors to the vertices of $C$.
We can verify that the coloring $c$ is a rainbow vertex-disconnection coloring of $G$.
For any two vertices $x,y\in V(G)$,
we have $S_{G}(x,y)=S_{G'}(x,y)\cup (V(C)\setminus \{x,y\})$. Hence, $rvd(G)\leq k+1$.

\textbf{Case 2}. There exists at least one vertex in $G$ with degree $2$.

{\em Subcase $2.1.$} There exists a vertex $u$ in $G$ which
is not a cut vertex and $d(u)=2$.

Let $N(u)=\{w,w'\}$
and $G'$ be a graph obtained by removing the edge $uw$ from $G$. Since
the size of $G'$ is $n+k-2$, we get $rvd(G')\leq k$
by the induction hypothesis.
Suppose that the coloring $c'$ of $G'$ is a rainbow vertex-disconnection coloring using colors from $[k]$.
If $c'(w)\neq c'(w')$, then we extend the coloring $c'$ of $G'$ to a coloring $c$ of $G$ as follows.
Let $c(u)=k+1$ and $c(x)=c'(x)$ for $x\in V(G)\setminus\{u\}$.
We can verify that the coloring $c$ is a rainbow vertex-disconnection coloring of $G$.
For two vertices $x, y\in V(G)\setminus \{u\}$, we find
$\bar{S}_{G}(x,y)=\bar{S}_{G'}(x,y)\cup \{u\}$.
For the case that one vertex is $u$ and the other is $p\in V(G)\setminus\{u\}$,
we have $\bar{S}_{G}(u,p)=\{w,w'\}$.
If $c(w)= c(w')$, then we extend the coloring $c'$ of $G'$ to a coloring $c$ of $G$ as follows.
Let $c(u)=c(w)=k+1$ and $c(x)=c'(x)$ for $x\in V(G)\setminus\{u, w\}$. We can verify that the coloring $c$ is
a rainbow vertex-disconnection coloring of $G$.
For any two vertices $x,y\in V(G)\setminus\{u,w'\}$,
if $w\notin \bar{S}_{G'}(x,y)$, then $\bar{S}_{G}(x,y)=\bar{S}_{G'}(x,y)\cup \{u\}$;
otherwise, $\bar{S}_{G}(x,y)=\bar{S}_{G'}(x,y)\cup\{w'\}$.
For the case that one of the vertices is $w'$ and the other vertex is $p\in V(G)\setminus\{u,w,w'\}$,
if $w\notin \bar{S}_{G'}(w,p)$, then $\bar{S}_{G}(w,p)=\bar{S}_{G'}(w,p)\cup\{u\}$;
otherwise, $\bar{S}_{G}(w,p)=\bar{S}_{G'}(w,p)$.
For the case that one vertex is $w$ and the other is $w'$,
if $w$ and $w'$ are adjacent, then $\bar{S}_{G}(w,w')=\bar{S}_{G'}(w,w')\setminus\{w\}\cup\{w',u\}$;
otherwise, $\bar{S}_{G}(w,w')=\bar{S}_{G'}(w,w')\cup\{u\}$.
For the case that one vertex is $u$ and the other is $p\in V(G)\setminus\{u\}$,
$\bar{S}_{G}(u,p)=\{w,w'\}$.

{\em Subcase $2.2.$} All the vertices with degree $2$ are cut vertices.

Suppose that there exist $q$ vertices with degree $2$.
We contract all the vertices with degree $2$ of $G$, namely,
we contract $q$ edges of $G$. Denote the resulting graph by $G'$.
Then $|V(G')|=n'=n-q$, $|E(G')|=n+k+1-2-q=n'+k+1-2$
and $\delta(G')\geq 3$.
It follows that $rvd(G')\leq k+1$ from Case 1. Suppose that the coloring $c'$ of $G'$ is a rainbow vertex-disconnection coloring using colors from $[k+1]$.
We now extend the coloring $c'$ of $G'$ to a coloring
$c$ of $G$ by assigning $c(x)=c'(x)$ for the
vertices with degree at least 3 and assigning
color 1 to the vertices with degree $2$.
We can verify that this coloring $c$ is a rainbow vertex-disconnection coloring of $G$.
For any two vertices $x, y$, if $d(x),d(y)\geq 3$,
we have $\bar{S}_{G}(x,y)=\bar{S}_{G'}(x,y)$;
otherwise, there exists at least one vertex with
degree $2$. Suppose $d(x)=2$. Then there exists only one path $P$ between
$x$ and $y$. We denote the neighbor of $x$ in $P$ by $z$.
Then we have $\bar{S}_{G}(x,y)=\{z\}$.
Hence, $rvd(G)\leq k+1$.

\textbf{Case 3}. $\delta(G)=1$ and no vertex is of degree 2.

Delete all the pendent vertices and pendent trees from $G$
and denote the resulting graph by $G'$.
Suppose that we delete $z$ vertices which are denoted by the set $Z$.
Then we have $\delta(G')\geq 2$ and
$|V(G')|=n'=n-z$ and $|E(G')|=n+k+1-2-z=n'+k+1-2$.
It follows from Cases 1 and 2 that $rvd(G)\leq k+1$.
Suppose the coloring $c'$ of $G'$ is a rainbow vertex-disconnection coloring using colors from $[k+1]$.
We now extend the coloring $c'$ of $G'$ to a coloring
$c$ of $G$ by assigning $c(x)=c'(x)$ for
$x\in V(G)\setminus Z $ and assigning color $1$ to remaining vertices.
We can verify that this coloring $c$ is a rainbow
vertex-disconnection coloring of $G$.
If $x, y \in G\setminus Z$,
then $\bar{S}_{G}(x,y)=\bar{S}_{G'}(x,y)$.
If $x\in Z$, then there exists only one path $P$ between
$x$ and $y$. We denote the neighbor of $x$ in $P$ by $v$. Then we have $\bar{S}_{G}(x,y)=\{v\}$. Hence, $rvd(G)\leq k+1$.

Now we have that if $rvd(G)=k$, then the size
of a connected graph $G$ of order $n$ is at least
$n+k-2$. It remains to show that for each
pair $k$, $n$ of positive integers with $1\leq k\leq n-1$,
there is a connected graph $G$ of order $n$ and size
$n+k-2$ such that $rvd(G)=k$.
We construct the graph $G_k$ as follows. For $1\leq k\leq n-2$,
given two vertices $u$ and $v$, $G_k$ is a graph obtained by
adding $k$ paths of length $2$ between $u$ and $v$ and $n-k-2$ pendent edges to $u$. Now we assign distinct colors to the $k$ common neighbors of $u$ and $v$ using colors in $[k]$, and assign color $1$ to vertex $u$, color $2$ to the remaining vertices.
For $k=n-1$, let $G_{n-1}=G_{n-2}+uv$.
Now we assign distinct colors to the $n-2$ common neighbors of $u$ and $v$ using colors in $[n-2]$,
and assign color $1$ to $u$, color $n-1$ to vertex $v$.
It is easy to verify that these colorings are
rainbow vertex-disconnection colorings of $G_k$ ($1\leq k\leq n-1$).
Thus, $rvd(G_k)\leq k$.
Furthermore, $rvd(G_k)\geq k$ by Theorem \ref{rvdlocalconn}, and so $rvd(G_k)=k$.
\end{proof}

\begin{lem}\label{rvd-minn}
For a graph $G$, if any two vertices have at least two common neighbors, then $|E(G)|\geq 2n-4+\lceil\frac{n}{2}\rceil$. Furthermore, the bound is sharp.
\end{lem}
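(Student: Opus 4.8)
The plan is to prove the lower bound $|E(G)|\ge 2n-4+\lceil n/2\rceil$ by a double-counting argument anchored at a vertex of minimum degree, and then to settle sharpness with an explicit construction. Write $\delta=\delta(G)$; note that by Theorem~\ref{rvdn} the hypothesis is precisely the condition $rvd(G)=n$, and that it forces $n\ge 4$.

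First I would show $\delta\ge 3$. If some vertex $v$ had degree $2$ with $N(v)=\{x,y\}$, then any common neighbour of $v$ and $x$ must lie in $N(v)\setminus\{x\}=\{y\}$, so $v$ and $x$ would have at most one common neighbour, contradicting the hypothesis; the degree-$1$ case is handled the same way. The \emph{key step} is then a double count: fix $v$ with $d(v)=\delta$ and count pairs $(u,w)$ with $u\ne v$ and $w\in N(u)\cap N(v)$. Summing over $u$ and using that each $u\ne v$ has at least two common neighbours with $v$ gives $\sum_{u\ne v}|N(u)\cap N(v)|\ge 2(n-1)$; reversing the order of summation rewrites the left side as $\sum_{w\in N(v)}(d(w)-1)$, since for $w\in N(v)$ the vertices $u\ne v$ with $w\in N(u)$ are exactly $N(w)\setminus\{v\}$. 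Hence $\sum_{w\in N(v)}d(w)\ge 2n-2+\delta$. Because the remaining $n-\delta$ vertices of $V(G)\setminus N(v)$ (which include $v$) each have degree at least $\delta$, this yields
$$2|E(G)|=\sum_{w\in V(G)}d(w)\ \ge\ (2n-2+\delta)+\delta(n-\delta).$$

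When $\delta=3$ the right-hand side equals exactly $5n-8$, which is the tight case. For $\delta\ge 4$ a short check shows that the same expression (for $n\ge 6$) together with the trivial bound $2|E(G)|\ge\delta n$ (which covers $\delta\ge 5$ and the few small $n$) is again at least $5n-8$. Thus $2|E(G)|\ge 5n-8$ in all cases; since $\sum_w d(w)$ is even, this improves to $2|E(G)|\ge 5n-8$ when $n$ is even and $2|E(G)|\ge 5n-7$ when $n$ is odd, which in both parities is exactly $|E(G)|\ge 2n-4+\lceil n/2\rceil$. The parity correction is what converts $5n-8$ into the stated ceiling.

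For sharpness I would exhibit a graph attaining the bound: take two vertices $a,b$ joined to each other and to every other vertex, and add a set of $\lceil (n-2)/2\rceil$ edges meeting each of the remaining $n-2$ vertices (a perfect matching when $n$ is even, a matching plus one path of length two when $n$ is odd), so each non-universal vertex acquires a non-universal neighbour. Any two non-universal vertices share $a,b$; the pair $\{a,b\}$ shares all $n-2$ others; and a pair $\{a,u\}$ shares $b$ together with a non-universal neighbour of $u$, so the hypothesis holds. The edge count is $2(n-1)-1+\lceil (n-2)/2\rceil=2n-4+\lceil n/2\rceil$, as required. The main obstacle is the double-counting identity and checking that it produces the sharp constant precisely at $\delta=3$; the estimate for $\delta\ge 4$ and the parity step are comparatively routine.
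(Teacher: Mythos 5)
Your proof is correct, but it takes a genuinely different and cleaner route than the paper's. The paper fixes two vertices $u,v$ of degree at most $4$ (after first arguing such a pair exists in a minimum extremal graph), sets $S_1=N(u)-v$, $S_2=N(v)-u$, and runs a case analysis over $|S_1\cap S_2|$ and the adjacency of $u,v$, using structural facts such as ``$G[N(x)]$ contains a $4$-cycle when $d(x)=4$ and a triangle when $d(x)=3$'' to lower-bound $|E(G[S])|$ in each subcase. You instead anchor a single double count at one minimum-degree vertex $v$: the identity $\sum_{u\neq v}|N(u)\cap N(v)|=\sum_{w\in N(v)}(d(w)-1)$ combined with the hypothesis gives $\sum_{w\in N(v)}d(w)\geq 2n-2+\delta$, and adding $\delta(n-\delta)$ for the vertices outside $N(v)$ yields $2|E(G)|\geq 5n-8$ at $\delta=3$, with $\delta\geq 4$ handled by the same expression or the trivial bound $2|E(G)|\geq\delta n$; the parity of the degree sum then upgrades this to exactly $2n-4+\lceil n/2\rceil$. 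I checked the key identity, the $\delta\geq 4$ coverage (the double count works for $n\geq 6$ and $\delta n\geq 5n-8$ covers $n\leq 8$, so together they cover everything), and the parity step: all are sound. Your approach avoids the extremality assumption, avoids all the subcases, and isolates where the sharp constant comes from ($\delta=3$ plus parity). Your sharpness construction (two universal vertices plus a near-perfect matching on the rest) is essentially identical to the paper's graph $H$ built from $G_{n-1}$, and your verification of the common-neighbor condition and the edge count $2n-3+\lceil (n-2)/2\rceil=2n-4+\lceil n/2\rceil$ is correct.
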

\begin{proof}
Suppose that $G$ is an extremal such graph with minimum size.
It remains to prove that $|E(G)|\geq 2n-4+\lceil\frac{n}{2}\rceil$.
Since any two vertices have at least two common neighbors, we have $\delta(G)\geq3$. If there is exactly one vertex with degree at most $4$, then
$e(G)> \frac{5(n-1)}{2}>2n-4+\lceil\frac{n}{2}\rceil,$ a contradiction. So, there are at least two vertices of $G$ with degree at most $4$.
Therefore, there are two vertices $u,v$ of $G$, such that either $d(u)=d(v)=4$, or $d(u)=3$, $d(v)=4$, or $d(u)=d(v)=3$. Let $N(u)-v=S_1$, $N(v)-u=S_2$ and $S=S_1\cup S_2$. Let
$$\theta=
\begin{cases}
1, & if\ u\mbox{ is adjacent to }v,\\
0, & if\ u\mbox{ is nonadjacent to }v.
\end{cases}$$

Note that every vertex of $Q=V(G)-S-u-v$ has at least two vertices in $S$ and its degree is at least $3$.
Let $E_0$ be the set of edges having at least one end in $Q$. Then, $|E_0|\geq 2|Q|+\left\lceil\frac{|Q|}{2}\right\rceil$.
Therefore,
\begin{align*}
|E(G)|&\geq |E(u,S_1)|+|E(v,S_2)|+|E(G[S])|+\theta+|E_0|\\
       &\geq|S_1|+|S_2|+|E(G[S])|+\theta+2|Q|+\left\lceil\frac{|Q|}{2}\right\rceil.
\end{align*}

Let $x$ be a vertex of $G$. For $y\in N(x)$, the two common neighbors of $x$ and $y$ are in $N(x)$. So, every vertex of $N(x)$ has a degree at least two in $G[N(x)]$. Furthermore,
if $d(x)=4$, then $G[N(x)]$ contains a $4$-cycle; if $d(x)=3$, then $G[N(x)]$ is a $3$-cycle.

\textbf{Case 1}. $d(u)=d(v)=4$.

Suppose $N(u)=\{x_1,x_2,x_3,x_4\}$.

{\em Subcase $1.1.$} $|S_1\cap S_2|=2$.
\begin{figure}[h]
    \centering
    \includegraphics[width=150pt]{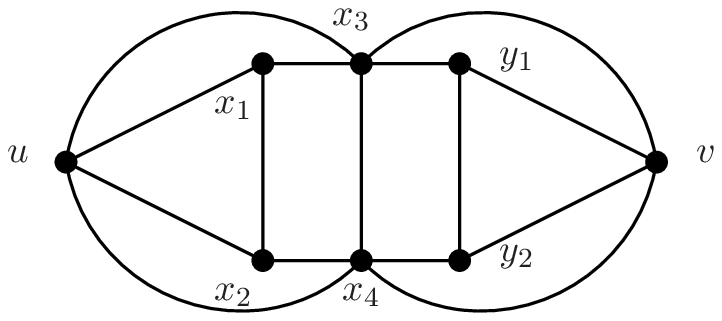}\\
    \caption{} \label{1}
\end{figure}
For $\theta=0$, $S_1=N(u)$ and $G[S_1]$ contains a $4$-cycle. By symmetry, $G[S_2]$ contains a $4$-cycle, as shown in Fig. \ref{1}.
Since $x_1,v$ ($x_2,v$) have two common neighbors in $S_2$, there is an edge connecting $x_1$ ($x_2$) to some vertex of $S_2$.
Therefore, $|E(G[S])|+\theta\geq 9$.
So, $|E(G)|\geq 2n-3+\lceil\frac{n}{2}\rceil$.

For $\theta=1$, suppose $x_4=v$ and $S_1\cap S_2=\{x_2,x_3\}$. Since $G[N(u)]$ contains a $4$-cycle, $|E(G[S_1])|\geq 2$.
By symmetry, $|E(G[S_2])|\geq 2$. Therefore, $|E(G[S])|\geq 4$, $|E(G[S])|+\theta\geq 5$.
So, $|E(G)|\geq 2n-4+\lceil\frac{n}{2}\rceil$.

{\em Subcase $1.2.$} $|S_1\cap S_2|=3$.

For $\theta=0$, since $G[S_i]$ contains a $4$-cycle $C_i$ where $i\in[2]$ and $C_1,C_2$ have at most two common edges, $|E(G[S])|+\theta\geq 6$.
So, $|E(G)|\geq 2n-3+\lceil\frac{n-1}{2}\rceil$.
For $\theta=1$, since $G[S_1]$ contains a $4$-cycle, $|E(G[S_1])|+\theta\geq3$.
So, $|E(G)|\geq 2n-3+\lceil\frac{n-1}{2}\rceil$.

{\em Subcase $1.3.$} $|S_1\cap S_2|=4$.
Obviously, $\theta=0$. Since $G[S]$ contains a $4$-cycle $C_4$, $|E(G[S])|+\theta\geq 4$.
So, $|E(G)|\geq 2n-3+\lceil\frac{n}{2}\rceil$.

\textbf{ Case 2}. $d(u)=d(v)=3$.

Assume that $|S_1\cap S_2|=2$. For $\theta=0$, since $G[S_i]$ contains a $3$-cycle $C_i$ where $i\in[2]$ and $C_1,C_2$ have one common edge, $|E(G[S])|+\theta\geq 5$.
So, $|E(G)|\geq 2n-4+\lceil\frac{n}{2}\rceil$.
For $\theta=1$, $|E(G[S_1])|+\theta\geq2$.
So, $|E(G)|\geq 2n-4+\lceil\frac{n}{2}\rceil$.
Assume $|S_1\cap S_2|=3$. Obviously, $\theta=0$. Since $G[S]$ contains a $3$-cycle $C_3$, $|E(G[S])|+\theta\geq 3$.
So, $|E(G)|\geq 2n-3+\lceil\frac{n-1}{2}\rceil$.

\textbf{Case 3}. $d(u)=4$ and $d(v)=3$.

Suppose $|S_1\cap S_2|=2$.
For $\theta=0$, similar to the proof of Subcase 1.1, $|E(G[S])|+\theta\geq 8$.
So, $|E(G)|\geq 2n-2+\lceil\frac{n-1}{2}\rceil$.
For $\theta=1$, $|E(G[S_1])|+\theta\geq4$.
So, $|E(G)|\geq 2n-3+\lceil\frac{n-1}{2}\rceil$.
Suppose that $|S_1\cap S_2|=3$. Obviously, $\theta=0$. We have $|E(G[S])|+\theta\geq 5$.
So, $|E(G)|\geq 2n-3+\lceil\frac{n}{2}\rceil$.

Above all, $|E(G)|\geq 2n-4+\lceil\frac{n}{2}\rceil$.
Furthermore, we prove that the bound is sharp.
Let $H$ be a graph by adding $\lceil \frac{n-2}{2}\rceil$ edges to $G_{n-1}$ (mentioned in the proof of Lemma \ref{rvd-min1}) such that each component of
$G[V(G)\setminus\{u,v\}]$ is a $P_2$ or $P_3$ and at most one component is $P_3$. Then $|E(H)|=2n-3+\lceil \frac{n-2}{2}\rceil=2n-4+\lceil \frac{n}{2}\rceil$.
Observe that any two vertices of the graph $H$ have at least two common neighbors.
Thus, $H$ is the graph attaining the bound.
\end{proof}

By Theorem \ref{rvdn}, and Lemmas \ref{rvd-min1} and \ref{rvd-minn}, we have the following result.

\begin{thm}\label{rvd-min}
For integers $k$ and $n$ with $1\leq k\leq n$,
the minimum size of a connected graph $G$ of order
$n\geq 4$ with $rvd(G)=k$ is
$$|E(G)|_{min}=
\begin{cases}
n+k-2, & 1\leq k\leq n-1,\\
2n-4+\lceil\frac{n}{2}\rceil, & k=n.
\end{cases}$$
\end{thm}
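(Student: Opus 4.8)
The plan is to assemble Theorem~\ref{rvd-min} directly from the two preceding lemmas together with the characterization in Theorem~\ref{rvdn}. The statement splits on whether $k\leq n-1$ or $k=n$, and these two regimes are handled by completely separate machinery, so I would present the proof as a short bookkeeping argument that invokes the lemmas rather than reproving anything.

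For the range $1\leq k\leq n-1$, the entire content is already contained in Lemma~\ref{rvd-min1}, which asserts that the minimum size of a connected graph $G$ of order $n$ with $rvd(G)=k$ is exactly $n+k-2$. So for this case I would simply cite Lemma~\ref{rvd-min1} and note that it supplies both the lower bound (every such graph has at least $n+k-2$ edges, via the inductive argument on edge deletions and the three degree cases) and the matching construction $G_k$ realizing equality. No further work is needed here.

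For the boundary case $k=n$, I would first recall from Theorem~\ref{rvdn} that $rvd(G)=n$ holds if and only if every two vertices of $G$ have at least two common neighbors. This translates the extremal question ``minimum size with $rvd(G)=n$'' into the purely structural question ``minimum size of a graph in which every pair of vertices has two common neighbors,'' which is precisely what Lemma~\ref{rvd-minn} answers: the minimum is $2n-4+\lceil\frac{n}{2}\rceil$, and the bound is sharp via the graph $H$ constructed there. Thus combining the equivalence from Theorem~\ref{rvdn} with the size bound from Lemma~\ref{rvd-minn} gives exactly $|E(G)|_{\min}=2n-4+\lceil\frac{n}{2}\rceil$ when $k=n$.

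Since both pieces are already established, the only genuine step is to verify that the two regimes together cover the full range $1\leq k\leq n$ with $n\geq 4$ and that there is no hidden inconsistency at the seam $k=n-1$ versus $k=n$; I would briefly check that $n+(n-1)-2=2n-3$ is indeed smaller than $2n-4+\lceil\frac{n}{2}\rceil$ for $n\geq 4$, confirming that the jump in minimum size at $k=n$ is consistent with the sharp rise in structural requirements (from merely being realizable to forcing two common neighbors for every pair). The main obstacle, such as it is, is purely one of correct citation and case-splitting rather than new mathematics, since all the substantive estimates live in Lemmas~\ref{rvd-min1} and~\ref{rvd-minn}.
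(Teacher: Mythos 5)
Your proposal is correct and matches the paper exactly: the paper derives Theorem~\ref{rvd-min} by citing Lemma~\ref{rvd-min1} for the range $1\leq k\leq n-1$ and combining Theorem~\ref{rvdn} with Lemma~\ref{rvd-minn} for the case $k=n$, which is precisely your assembly. Your extra sanity check at the seam $k=n-1$ versus $k=n$ is harmless but not needed.
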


Next, it is natural to consider the following extremal problem:

For a given pair $k$, $n$ of positive integers with $1\leq k\leq n$, what is the maximum possible size of a connected graph $G$ of order $n$ such that the rainbow vertex-disconnection number of $G$ is $k$ ?

However, for this problem we can only get the lower and upper bounds on the maximum size of a connected graph $G$ of order $n$ with $rvd(G)=k$.
We now present two known lemmas which we will be used in our proof.

\begin{lem}\cite{B}\label{rvd-lmax1}
Let $k=2,3$ and let $G$ be a graph of order $n$ such that $\kappa^+(G)\leq k$. Then
$|E(G)|\leq \lfloor \frac{k+1}{2}(n-1)\rfloor$.
\end{lem}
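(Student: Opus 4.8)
The plan is to first reformulate the hypothesis and then reduce the whole estimate to a per-block inequality. By Menger's theorem, $\kappa_G(x,y)$ equals the maximum number of internally disjoint $x$-$y$ paths (for adjacent $x,y$ the edge $xy$ counts as one such path), so $\kappa^+(G)\le k$ says precisely that no two vertices of $G$ are joined by $k+1$ internally disjoint paths; equivalently, $G$ contains no subdivision of the two-vertex multigraph $\Theta_{k+1}$ formed by $k+1$ parallel edges. This property is monotone: for any subgraph $H\subseteq G$ one has $\kappa_H(x,y)\le \kappa_G(x,y)$, hence $\kappa^+(H)\le \kappa^+(G)$. The goal is therefore to prove the sharper statement that every block $B$ of $G$ satisfies $|E(B)|\le \frac{k+1}{2}(|V(B)|-1)$, from which the lemma will follow by summation.

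Next I would carry out the block-decomposition reduction. Let $B_1,\dots,B_t$ be the blocks of $G$ and write $n_i=|V(B_i)|$. I would use the standard identity $\sum_{i}(n_i-1)=n-1$ for a connected graph (each block, when attached at a cut vertex, contributes exactly $n_i-1$ new vertices). Since each $B_i$ is a subgraph, the monotonicity above gives $\kappa^+(B_i)\le \kappa^+(G)\le k$, so the per-block bound applies to every $B_i$. Summing yields $|E(G)|=\sum_i|E(B_i)|\le \frac{k+1}{2}\sum_i(n_i-1)=\frac{k+1}{2}(n-1)$, and because $|E(G)|$ is an integer this improves to $|E(G)|\le \lfloor\frac{k+1}{2}(n-1)\rfloor$, as required. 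Thus the whole content lies in the per-block inequality.

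For $k=2$ the per-block bound is immediate from Lemma \ref{rvd2lem}: a block is either a $K_2$ or a $2$-connected graph in which every two vertices are joined by at most (hence, by $2$-connectivity, exactly) two internally disjoint paths, so by that lemma it is a cycle; in either case $|E(B)|=|V(B)|\le \tfrac{3}{2}(|V(B)|-1)$ for $|V(B)|\ge 3$, and the $K_2$ case is trivial. For $k=3$ I would prove $|E(B)|\le 2(|V(B)|-1)$ by induction on the number of vertices (phrased for the whole graph, $|E(G)|\le 2(n-1)$, to avoid fragmenting the block structure). If $G$ has a vertex $v$ with $d(v)\le 2$, delete it: when $G-v$ is connected the induction hypothesis gives $|E(G)|\le 2(n-2)+2=2(n-1)$; when $G-v$ splits into components $G_1,\dots,G_m$ (so $m=2$ forced by $d(v)=2$), applying the hypothesis to each $G_j$ and using $\sum_j(|V(G_j)|-1)=n-1-m$ again keeps the count within $2(n-1)$.

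The hard part will be the remaining case $k=3$ with $\delta(G)\ge 3$, where the extremal graph $B=K_4$ attains equality with every degree equal to $3$. A naive deletion of a degree-$3$ vertex loses exactly one edge against the target $2(|V|-1)$, so the induction cannot be closed by deletion alone. The crux is to use $\Theta_4$-subdivision-freeness to locate a \emph{reducible configuration}: for instance a $K_4$ that occurs as a block or is attached to the rest of $G$ through a single cut vertex and can be peeled off while restoring equality, or a degree-$3$ vertex whose three neighbours are forced, by the absence of a fourth internally disjoint path, to induce a compensating subgraph. Showing that such a configuration must exist in any $2$-connected, $\Theta_4$-subdivision-free graph of minimum degree at least $3$ is the main obstacle; this is exactly the detailed structural analysis supplied by \cite{B}, and I would rely on it to finish the $k=3$ per-block bound.
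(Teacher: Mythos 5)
This lemma is stated in the paper as a citation to Bollob\'as \cite{B}; the paper supplies no proof of its own, so there is nothing internal to compare you against. Judged on its own terms, your reduction is sound: the Menger reformulation of $\kappa^+(G)\le k$ as the absence of $k+1$ internally disjoint paths between any pair, the monotonicity under subgraphs, and the identity $\sum_i(n_i-1)=n-1$ over the block decomposition correctly reduce everything to the per-block bound $|E(B)|\le\frac{k+1}{2}(|V(B)|-1)$, with the floor recovered from integrality. Your $k=2$ case is in fact a complete, self-contained proof: a $2$-connected block with $\kappa^+\le 2$ has exactly two internally disjoint paths between every pair, hence is a cycle by Lemma \ref{rvd2lem}, and cycles and $K_2$'s satisfy the bound. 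For $k=3$ your low-degree deletion step is also correct as far as it goes. The one genuine gap is the one you name yourself: the case of a $2$-connected, $\Theta_4$-subdivision-free graph with $\delta\ge 3$, where $K_4$ attains equality and a naive vertex deletion does not close the induction. That case is the entire content of Bollob\'as's theorem, and you do not prove it --- you defer to \cite{B}, exactly as the paper does for the whole lemma. So your write-up is strictly more informative than the paper's treatment (it isolates and proves the easy kernel), but it is not a proof of the $k=3$ statement without importing the same external result.
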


\begin{lem}\cite{M}\label{rvd-lmax3}
Let $G$ be a graph of order $n$. Then for $k\geq 4$,
$\max\{|E(G)|:\kappa^+(G)\leq k\}\leq k(n-1)-{k\choose2}$.
\end{lem}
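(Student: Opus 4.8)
The plan is to argue by induction on the order $n$, reducing via the deletion of a low-degree vertex and exploiting the monotonicity of $\kappa^+$ under vertex deletion. Since the inequality is used in the regime $n\ge k$, I would first dispose of the base cases $n\in\{k,k+1\}$: here $|E(G)|\le\binom{n}{2}$, and a direct computation gives $\binom{k}{2}=k(k-1)-\binom{k}{2}$ and $\binom{k+1}{2}=k\cdot k-\binom{k}{2}$, so the bound holds with equality (realized by $K_n$). Assume then that $n\ge k+2$ and that the statement is proved for all graphs of smaller order with upper connectivity at most $k$.

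The key reduction is the claim that any graph $G$ with $\kappa^+(G)\le k$ and $n\ge k+2$ has a vertex $v$ with $d(v)\le k$. Granting this, I would delete $v$ and observe that deleting a vertex never increases a local connectivity: for $x,y\in V(G)\setminus\{v\}$ any family of internally disjoint $x$-$y$ paths in $G-v$ is also such a family in $G$, so $\kappa_{G-v}(x,y)\le\kappa_G(x,y)$ (the adjacent convention is respected, as $\kappa_{(G-v)-xy}(x,y)\le\kappa_{G-xy}(x,y)$). Hence $\kappa^+(G-v)\le\kappa^+(G)\le k$, and $G-v$ has order $n-1\ge k+1\ge k$, so the induction hypothesis applies and yields $|E(G-v)|\le k(n-2)-\binom{k}{2}$. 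Therefore
\[
|E(G)|=|E(G-v)|+d(v)\le k(n-2)-\binom{k}{2}+k=k(n-1)-\binom{k}{2},
\]
which closes the induction.

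The real content, and the main obstacle, is the degree claim: if $\delta(G)\ge k+1$ then $\kappa^+(G)\ge k+1$, i.e.\ some pair of vertices is joined by $k+1$ internally disjoint paths. I would attack its contrapositive by a fan/Menger analysis. Fixing a vertex $v$ with $d(v)\ge k+1$ and a neighbor $u$, the assumption $\kappa(u,v)\le k$ forces, through $\kappa(u,v)=\kappa_{G-uv}(u,v)+1$, a vertex set of size at most $k-1$ separating $u$ from $v$ in $G-uv$; since $v$ still has at least $k$ neighbors in $G-uv$, no such separator can contain all of $N(v)$, and tracking which neighbors survive on each side imposes strong structural constraints that one expects to iterate into a contradiction. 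Making this propagation precise—quantitatively controlling how a large minimum degree forces a locally highly connected configuration—is exactly the delicate part handled by Mader's work cited as \cite{M}; it is also the reason the clean quantity $k(n-1)-\binom{k}{2}$ need only be asserted as an upper bound rather than an exact extremal value. I would note finally that this argument does not actually use $k\ge 4$ (that hypothesis merely reflects that the sharp value for $k\le 3$ is the stronger bound of Lemma \ref{rvd-lmax1}), so the same induction also yields the inequality for smaller $k$.
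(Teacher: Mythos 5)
The paper does not prove this lemma at all: it is quoted directly from Mader's paper \cite{M}, so there is no internal argument to compare against. Your inductive scaffolding is, however, exactly the standard way the edge bound is derived from Mader's work, and the parts you actually carry out are correct: the base cases $n=k$ and $n=k+1$ check out ($\binom{k}{2}=k(k-1)-\binom{k}{2}$ and $\binom{k+1}{2}=k^2-\binom{k}{2}$, both attained by complete graphs whose upper connectivity is $k-1$ and $k$ respectively), local connectivities do not increase under vertex deletion, and deleting a vertex of degree at most $k$ closes the induction. Your observation that the statement as printed needs $n\geq k$ (it fails for $K_{k-1}$) and that $k\geq 4$ plays no role in the inequality itself are both accurate and worth recording.

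The issue is that the entire mathematical content of the lemma is concentrated in the one step you do not prove: the claim that $\kappa^+(G)\leq k$ forces $\delta(G)\leq k$, i.e.\ Mader's theorem that $\kappa^+(G)\geq\delta(G)$ (every graph has two adjacent vertices $x,y$ with $\kappa_G(x,y)=\min\{d(x),d(y)\}$ under the convention $\kappa_G(x,y)=\kappa_{G-xy}(x,y)+1$). Your sketch of this step --- a fan/Menger analysis whose ``strong structural constraints \ldots one expects to iterate into a contradiction'' --- is a heuristic, not an argument; the actual proof is a delicate extremal induction and cannot be recovered from what you wrote. So judged as a self-contained proof, there is a genuine gap at precisely the nontrivial point. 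Judged as a reduction of the cited edge bound to the deeper cited degree/local-connectivity theorem, your proposal is correct and in fact more informative than the paper's bare citation; but it should be presented explicitly as ``the edge bound follows from Mader's theorem $\kappa^+\geq\delta$ by induction on $n$,'' with \cite{M} invoked for that theorem, rather than as an independent proof of the lemma.
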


From these lemmas we have the following results.

\begin{thm}\label{rvd-max1}
For $k=2$, $3$, let $G$ be a graph of order $n$ with $rvd(G)=k$. Then, $|E(G)|_{max}=\lfloor \frac{k+1}{2}(n-1)\rfloor$.
\end{thm}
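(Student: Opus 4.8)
The plan is to prove the claimed equality by separately establishing a matching upper bound and a tightness (extremal) construction; the upper bound is essentially immediate, and the real work is producing graphs that realize $rvd=k$ exactly while attaining the maximum size. For the upper bound I would invoke Theorem~\ref{rvdlocalconn}, which gives $\kappa^+(G)\le rvd(G)$. Thus if $G$ has order $n$ and $rvd(G)=k$ with $k\in\{2,3\}$, then $\kappa^+(G)\le k$, and Lemma~\ref{rvd-lmax1} yields
$$|E(G)|\le\left\lfloor\frac{k+1}{2}(n-1)\right\rfloor$$
with no further effort, so $|E(G)|_{\max}\le\lfloor\frac{k+1}{2}(n-1)\rfloor$.

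For tightness I would exhibit a connected graph of order $n$ and size exactly $\lfloor\frac{k+1}{2}(n-1)\rfloor$ with rainbow vertex-disconnection number $k$. For $k=2$ this is clean: take the friendship graph $F_t$ ($t$ triangles sharing one common vertex) when $n=2t+1$, and attach one pendant edge when $n=2t+2$. Every block is a $K_2$ or a $C_3$ with at least one cycle present, so $rvd=2$ by Theorem~\ref{rvd2}, and a direct count gives exactly $\lfloor\frac{3}{2}(n-1)\rfloor$ edges. For $k=3$ the natural candidate is the wheel $W_{n-1}=C_{n-1}\vee K_1$, which has $2(n-1)$ edges and, by the computation behind Theorem~\ref{p3}, satisfies $rvd=3$ exactly when $4\mid(n-1)$; equivalently one glues copies of the block $W_4$ along cut vertices (each copy adding $4$ vertices and $8$ edges) and keeps $rvd=3$ via Lemma~\ref{rvdblock}. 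This realizes the bound whenever $n\equiv 1\pmod 4$.

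The main obstacle is precisely the gap between $\kappa^+$ and $rvd$. The extremal graphs for Lemma~\ref{rvd-lmax1}, such as $W_{n-1}$ (or $K_4$ for $n=4$), satisfy $\kappa^+=3$ but often have $rvd=4$, so they do not certify tightness on their own: when $4\nmid(n-1)$ the wheel has $rvd=4$, and a degree-sequence analysis of the size-$2(n-1)$ graphs with $\kappa^+\le 3$ indicates that these extremal graphs are essentially forced to be wheels for the smallest orders. To close this gap I would build the extremal graph from $2$-connected blocks $B$ that simultaneously satisfy $\kappa^+(B)=3$, $|E(B)|=2(|V(B)|-1)$, and $rvd(B)=3$ (thereby forbidding dense blocks like $K_4$), and then combine blocks of several sizes so that every admissible order is covered. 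Verifying that each such block admits a $3$-coloring yielding an $x$-$y$ rainbow vertex-cut for every pair $x,y$, while the $\kappa^+$ bound forces $rvd\ge 3$, is the technical heart of the argument and the step most likely to demand careful case analysis together with a separate treatment of the smallest orders.
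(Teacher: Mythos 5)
Your proposal follows essentially the same route as the paper: the upper bound via $\kappa^+(G)\le rvd(G)$ (Theorem~\ref{rvdlocalconn}) combined with Lemma~\ref{rvd-lmax1}, triangle-block graphs with one exceptional block for $k=2$, and wheels for $k=3$. The gap you flag in the $k=3$ case --- that the wheel (or glued-wheel) construction only certifies tightness when $n\equiv 1\pmod 4$ --- is genuine, but the paper's own proof has exactly the same gap: it only exhibits $W_n$ with $4\mid n$ and says nothing about the remaining residues of $n$, so your more cautious treatment is, if anything, an improvement rather than a deviation.
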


\begin{proof}
Since $rvd(G)=k$, we have $\kappa^+(G)\leq k$ by Theorem \ref{rvdlocalconn}. Then we get that the maximum size of a graph $G$ of order $n$ with $rvd(G)=k$ is no more than
$\lfloor \frac{k+1}{2}(n-1)\rfloor$ by Lemma \ref{rvd-lmax1}. Furthermore, for $k=2$,
let $\mathcal{H}$ be the set of connected graphs whose blocks are triangles with the exception that at most one block is a cut edge or $C_4$.
Then for any graph $G\in \mathcal{H}$, we have that  $|E(G)|=\lfloor\frac{3}{2}(n-1)\rfloor$ and $rvd(G)=2$.
For $k=3$, we know that $rvd(W_n)=3$ for $4\mid n$ and $|E(W_n)|=2n=\lfloor2(|V(W_n)|-1)\rfloor$.
\end{proof}

\begin{thm}\label{rvd-max3}
For $k\geq 4$, let $G$ be a graph of order $n$ with $rvd(G)=k$. Then, $\frac{1}{2}k(n-1)-{k\choose2}\leq |E(G)|_{max}\leq k(n-1)-{k\choose2}$.
\end{thm}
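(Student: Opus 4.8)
The plan is to treat the two inequalities separately: the upper bound is essentially immediate, while the lower bound requires an explicit dense construction realizing $rvd(G)=k$.

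For the upper bound I would first invoke Theorem \ref{rvdlocalconn}: since $rvd(G)=k$, we have $\kappa^+(G)\leq rvd(G)=k$. As $k\geq 4$, Lemma \ref{rvd-lmax3} applies directly and gives $|E(G)|\leq k(n-1)-{k\choose2}$ for every graph $G$ of order $n$ with $\kappa^+(G)\leq k$, hence in particular for every such $G$ with $rvd(G)=k$. Taking the maximum over all admissible $G$ yields $|E(G)|_{max}\leq k(n-1)-{k\choose2}$ with no further work.

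For the lower bound it suffices to exhibit a single connected graph $G$ of order $n$ with $rvd(G)=k$ and $|E(G)|\geq \frac{1}{2}k(n-1)-{k\choose2}$. I would build $G$ as a clique-tree whose blocks are complete graphs. Concretely, fix a root vertex and write $n-1=t(k-1)+r$ with $0\leq r\leq k-2$; attach $t$ copies of $K_k$ sharing only the root (a windmill of cliques), and, when $r\geq 1$, attach one further block $K_{r+1}$ at the root to absorb the remaining $r$ vertices. Then $|V(G)|=1+t(k-1)+r=n$, the blocks of $G$ are exactly the $t$ copies of $K_k$ together with (if $r\geq 1$) the single $K_{r+1}$, and the hypothesis $n\geq k$ (forced by $rvd(G)=k$ via Theorem \ref{rvdlocalconn}) guarantees $t\geq 1$. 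By Corollary \ref{rvdcomplete} we have $rvd(K_k)=k$ since $k\geq 4$, while $rvd(K_{r+1})\leq r+1\leq k-1<k$; hence Lemma \ref{rvdblock} gives $rvd(G)=k$, as required. It then remains to check the edge count: $|E(G)|=t{k\choose2}+{r+1\choose2}$ (the last term being $0$ when $r=0$), and using $t(k-1)=n-1-r$ this equals $\frac{k(n-1)}{2}-\frac{kr}{2}+{r+1\choose2}$. Since $0\leq r\leq k-2$, we have $\frac{kr}{2}\leq \frac{k(k-2)}{2}<{k\choose2}$, so $|E(G)|\geq \frac{k(n-1)}{2}-{k\choose2}=\frac{1}{2}k(n-1)-{k\choose2}$, which is exactly the desired bound. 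Combining the two parts completes the argument.

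A remark on difficulty: neither inequality is individually hard once Lemma \ref{rvd-lmax3} and the block behavior of $rvd$ are in hand. The genuine obstacle, and the reason the theorem only bounds $|E(G)|_{max}$ within a range rather than pinning it down, is that the clique-tree construction overshoots the stated lower bound by about ${k\choose2}$ edges while still falling short of the Mader-type upper bound $k(n-1)-{k\choose2}$. Closing this gap would require either a denser connected graph realizing $rvd(G)=k$, or a sharper upper bound that exploits the full rainbow vertex-disconnection condition rather than merely $\kappa^+(G)\leq k$; I expect the latter to be the main difficulty, since the extremal graphs for $\kappa^+(G)\leq k$ need not admit a $k$-coloring that is rainbow vertex-disconnected.
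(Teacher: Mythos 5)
Your proposal is correct and follows essentially the same route as the paper: the upper bound via $\kappa^+(G)\leq rvd(G)=k$ combined with Lemma \ref{rvd-lmax3}, and the lower bound via a connected graph whose blocks are copies of $K_k$ plus at most one smaller clique, with $rvd(G)=k$ certified by Lemma \ref{rvdblock}. Your version merely spells out the edge count and the block argument in more detail than the paper's ``obviously.''
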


\begin{proof}
Similar to the proof of Theorem \ref{rvd-max1}, we get $|E(G)|_{max}\leq k(n-1)-{k\choose2}$ by Theorem \ref{rvdlocalconn} and Lemma \ref{rvd-lmax3}.
For the lower bound, let $G$ be a graph such that each block is a $K_k$ except that at most one block is $K_t$, where $t=n-(k-1)\lfloor\frac{n-1}{k-1}\rfloor$.
Obviously, $rvd(G)=k$. Thus, $|E(G)|_{max}\geq \lfloor\frac{n-1}{k-1}\rfloor \times|E(K_k)|+|E(K_t)|\geq\frac{1}{2}k(n-1)-{k\choose2}$.
\end{proof}

As one can see, further efforts are needed to get the exact value of $|E(G)|_{max}$ for $k\geq 4$.

\end{document}